\begin{document}
\newtheorem{Theorem}{Theorem}[section]
\newtheorem{Lemma}[Theorem]{Lemma}
\newtheorem{Proposition}[Theorem]{Proposition}
\newtheorem{Example}[Theorem]{Example}
\newtheorem{Definition}[Theorem]{Definition}
\newtheorem{Corollary}[Theorem]{Corollary}

\title{On $\mathscr{H}C$-subgroups of a finite group\thanks{This manuscript was finished in August 2013.} \thanks{Research is supported by a NNSF grant of China (Grant 11371335)
and Research Fund for the Doctoral Program of Higher Education of China(Grant 20113402110036).}
\author{Lijun Huo, Xiaoyu Chen\thanks{Corresponding author.}, Wenbin Guo\\
\small Department of Mathematics, University of Science and
Technology of China, \\ \small Hefei, Anhui 230026, P. R. China \\
\small e-mails: ljhuo@mail.ustc.edu.cn, jelly@mail.ustc.edu.cn, wbguo@ustc.edu.cn}
}
\date{}
\maketitle

\begin{abstract}A subgroup $H$ of a finite group $G$ is said to be an $\mathscr{H}C$-subgroup of $G$ if there exists a
normal subgroup $T$ of $G$ such that $G=HT$ and $H^g \cap N_T(H)\leq H$ for all $g\in G$.
 In this paper, we investigate the structure of a finite group $G$ under the assumption that certain
subgroups of $G$ of arbitrary prime power order are $\mathscr{H}C$-subgroups of $G$.\end{abstract}
{\bf Key words:} $\mathscr{H}$-subgroups; $\mathscr{H}C$-subgroups;
$p$-nilpotent group; nilpotent group; supersolvable group.
\renewcommand{\thefootnote}{\empty}
\footnotetext{2000 AMS Mathematics Subject Classification: 20D10, 20D20.}
\section{Introduction}
Throughout this paper, all groups considered are finite. $G$ always denotes a group, $p$ denotes a prime, and $|G|_p$ denotes the order of Sylow $p$-subgroups of $G$. A class of groups $\mathfrak{F}$ is called a formation if $\mathfrak{F}$ is closed under taking homomorphic images and subdirect products. A formation $\mathfrak{F}$ is said to be saturated if $G\in \mathfrak{F}$ whenever $G/\Phi(G)\in \mathfrak{F}$. All
unexplained notation and terminology are standard, as in \cite{09,14,15}.\par
Recall that a subgroup $H$ of $G$ is said to be an $\mathscr{H}$-subgroup of $G$ if $H^g \cap N_G(H)\leq H$ for
all $g\in G$. This concept was introduced by Goldschmidt in \cite{30} and Bianchi et al. in \cite{01}.
It is easy to see that normal subgroups, Sylow subgroups and self-normalizing
subgroups of $G$ are all $\mathscr{H}$-subgroups of $G$.
Cs$\ddot{\rm{o}}$rg$\ddot{\rm{o}}$, Herzog \cite{02} and Asaad \cite{03} further investigated the influence of
$\mathscr{H}$-subgroups on the structure of a finite group.\par
Besides, Y. Wang \cite{33} introduced the concept of c-normal subgroups. A subgroup $H$ of $G$ is said to be c-normal in $G$ if there exists a normal subgroup $K$ of $G$ such that $G=HK$ and $H\cap K\leq H_G$, where $H_G$ is the largest normal subgroup of $G$ contained in $H$. The properties of c-normal subgroups have been studied by many authors, see for example, \cite{36,37,38,39}.\par
Recently, some attempts were made to give a generalization of both $c$-normal subgroups and $\mathscr{H}$-subgroups. In \cite{99}, M. Asaad et al. introduced the concept of weakly $\mathscr{H}$-subgroups: a subgroup $H$ of $G$ is called an $\mathscr{H}C$-subgroup of $G$ if there exists a
normal subgroup $T$ of $G$ such that $G=HT$ and $H\cap T$ is an $\mathscr{H}$-subgroup of $G$. Meanwhile, X. Wei and X. Guo \cite{05} introduced the concept of $\mathscr{H}C$-subgroups: a subgroup $H$ of $G$ is said to be an $\mathscr{H}C$-subgroup of $G$ if there exists a
normal subgroup $T$ of $G$ such that $G=HT$ and $H^g \cap N_T(H)\leq H$ for all $g\in G$. It is easy to see that every weakly $\mathscr{H}$-subgroup of $G$ is an $\mathscr{H}C$-subgroup of $G$.

In \cite{05}, the authors
gave some conditions on maximal subgroups or minimal subgroups of Sylow subgroups, which are sufficient to guarantee a group to be $p$-nilpotent or supersolvable.
In this paper, we continue to investigate the structure of a group $G$ under the assumption that certain
subgroups of $G$ of arbitrary prime power order are $\mathscr{H}C$-subgroups of $G$. New characterizations of some classes of finite groups are obtained.

\section{Preliminaries}

\begin{Lemma}
Suppose that $H$ is an $\mathscr{H}$-subgroup of $G$.

$(1)$ \textup{(\cite[Theorem 6(2)]{01})} If H is subnormal in G, then H is normal in G.

$(2)$ \textup{(\cite[Lemma 7(2)]{01})} If $H\leq K\leq G$, then $H$ is an $\mathscr{H}$-subgroup of $K$.

$(3)$ \textup{(\cite[Lemma 2(1)]{01})} If $N\leq H$ and $N\unlhd G$, then $H$ is an $\mathscr{H}$-subgroup of $G$ if and only if $H/N$ is an $\mathscr{H}$-subgroup of $G/N$.

$(4)$ \textup{(\cite[Theorem 6(3)]{01})} If $N\unlhd G$ and $N\leq N_G(H)$, then $N_G(HN)=N_G(H)$ and $HN$ is an $\mathscr{H}$-subgroup of $G$.
\end{Lemma}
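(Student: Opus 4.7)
My plan is to dispatch the four clauses separately, since they are independent and only (1) requires more than a direct manipulation of the defining inequality $H^g \cap N_G(H) \leq H$. Throughout I would use two standard identities: $N_K(H) = N_G(H) \cap K$ for $H \leq K \leq G$, and $N_{G/N}(H/N) = N_G(H)/N$ when $N \unlhd G$ and $N \leq H$. Clause (2) then reduces to the one-line computation $H^k \cap N_K(H) = H^k \cap N_G(H) \cap K \leq H \cap K = H$ for $k \in K$. Clause (3) follows because $(H/N)^{gN} = H^g/N$ (using $N \leq H \leq H^g$ and $N \unlhd G$), so the two $\mathscr{H}$-conditions are identical after quotienting by $N$, giving the ``if and only if'' at once.

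For clause (1), I would induct on $|G|$. If $H = G$ or $H$ already has subnormal defect $1$ there is nothing to show. Otherwise take a subnormal chain $H = H_0 \unlhd H_1 \unlhd \cdots \unlhd H_n = G$ with $n \geq 2$ and focus on $H_{n-1}$, which is a proper normal subgroup of $G$ containing $H$ as a subnormal subgroup. By (2), $H$ is an $\mathscr{H}$-subgroup of $H_{n-1}$, so the inductive hypothesis gives $H \unlhd H_{n-1}$; in particular, $H_{n-1} \leq N_G(H)$. Now for every $g \in G$ one has $H^g \leq H_{n-1}^g = H_{n-1} \leq N_G(H)$, so the $\mathscr{H}$-property forces $H^g = H^g \cap N_G(H) \leq H$, whence $H^g = H$ by order. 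Hence $H \unlhd G$.

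For clause (4), I would first pin down $N_G(HN) = N_G(H)$. The inclusion $\supseteq$ is immediate from $N \unlhd G$. For the reverse, observe that $HN \leq N_G(H)$, because both $H$ and $N$ normalize $H$; so for $g \in N_G(HN)$ we have $H^g \leq (HN)^g = HN \leq N_G(H)$, whence the $\mathscr{H}$-property yields $H^g \leq H$ and thus $H^g = H$ by cardinality. Once this is in hand, the $\mathscr{H}$-condition for $HN$ collapses to a single use of the modular law (legitimate because $N \leq N_G(H)$): for any $g \in G$,
\[
(HN)^g \cap N_G(HN) \;=\; H^g N \cap N_G(H) \;=\; N\bigl(H^g \cap N_G(H)\bigr) \;\leq\; NH \;=\; HN.
\]
The only step that is not purely formal is the induction in clause (1), but it closes immediately once the configuration $H \unlhd H_{n-1} \unlhd G$ is reached; everything else is bookkeeping with the defining inequality.
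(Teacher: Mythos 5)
The paper offers no proof of this lemma at all: it is stated as a collection of results quoted from Bianchi--Gillio Berta Mauri--Herzog--Verardi \cite{01}, so there is nothing internal to compare against. Your self-contained arguments are correct and are essentially the standard proofs from that reference. Clauses (2) and (3) are indeed immediate from the identities $N_K(H)=N_G(H)\cap K$ and $N_{G/N}(H/N)=N_G(H)/N$ together with the correspondence $(H^g/N)\cap(N_G(H)/N)=(H^g\cap N_G(H))/N$, which legitimately gives both directions of (3) at once. Your induction for (1) is sound, with the one small point that you should take the subnormal chain of minimal length (or with strict inclusions) to guarantee $H_{n-1}<G$ before invoking the inductive hypothesis; after that, $H^g\leq H_{n-1}\leq N_G(H)$ closes the argument exactly as you say. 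In (4), the reverse inclusion $N_G(HN)\leq N_G(H)$ via $H^g\leq HN\leq N_G(H)$ and the Dedekind modular law computation $H^gN\cap N_G(H)=N(H^g\cap N_G(H))\leq HN$ (valid precisely because $N\leq N_G(H)$) are both correct. In short, you have supplied complete proofs where the paper relies on citation, and your route is the expected one.
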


\begin{Lemma}\label{2.2}Let $H$ and $K$ be subgroups of $G$, and $N\unlhd G$.

$(1)$ \textup{(\cite[Lemma 2.3(1)]{05})} If $H\leq K$ and $H$ is an $\mathscr{H}C$-subgroup of $G$, then $H$ is an $\mathscr{H}C$-subgroup of $K$.

$(2)$ \textup{(\cite[Lemma 2.3(2)]{05})} If $N\leq H$, then $H$ is
an $\mathscr{H}C$-subgroup of $G$ if and only if $H/N$ is an $\mathscr{H}C$-subgroup of $G/N$.

$(3)$ \textup{(\cite[Lemma 2.4]{05})} If $H$ is a $p$-group with $(p, |N|)=1$ and $H$ is an $\mathscr{H}C$-subgroup of $G$, then $HN$ is an
$\mathscr{H}C$-subgroup of $G$ and $HN/N$ is an $\mathscr{H}C$-subgroup of $G/N$.
\end{Lemma}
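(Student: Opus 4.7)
Plan. The three assertions ascend in difficulty: (1) is a formal transfer, (2) unwinds cleanly using $N \leq H$, and (3) requires a Sylow/Frattini argument that exploits the coprimality $(|H|,|N|)=1$.

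For (1), take $T_K := T \cap K$, which is normal in $K$. Dedekind's modular law (applicable because $H \leq K$) yields $K = K \cap HT = H(T \cap K) = HT_K$, and the containment $N_{T_K}(H) \leq N_T(H)$ transfers the $\mathscr{H}C$-condition: $H^k \cap N_{T_K}(H) \leq H^k \cap N_T(H) \leq H$ for every $k \in K$.

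For (2), in the forward direction take $\bar T := TN/N$, so that $(H/N)\bar T = G/N$. Because $N \leq H$, one has $N \leq H^g$ for every $g$, hence $(H/N)^{gN} = H^g/N$; and every element of $N$ normalizes $H$, yielding $N_{TN}(H) = N_T(H)\cdot N$ and $N_{\bar T}(H/N) = N_{TN}(H)/N$. The inclusion $(H/N)^{\bar g} \cap N_{\bar T}(H/N) \leq H/N$ then reduces to $H^g \cap N_T(H)\cdot N \leq H$: if $tn$ lies in this intersection with $t \in N_T(H)$ and $n \in N$, then $n \in N \leq H \leq H^g$ forces $t = (tn)n^{-1} \in H^g$, so $t \in H^g \cap N_T(H) \leq H$ and hence $tn \in H$. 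The converse lifts $\bar T$ to a normal subgroup $T \supseteq N$ of $G$ via the correspondence theorem and reverses the calculation.

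For (3), by part (2) (applied with $HN$ in place of $H$) the two assertions are equivalent, so it suffices to prove that $HN/N$ is an $\mathscr{H}C$-subgroup of $G/N$ with complement $\bar T := TN/N$. Pulled back to $G$, the required inclusion reads
\[
H^g N \cap N_{TN}(HN) \leq HN.
\]
The crux is that $(|H|,|N|)=1$ and $N \unlhd G$ make $H$ a Sylow $p$-subgroup of $HN$, and since $HN = H\cdot N$ with $H$ normalizing itself, every Sylow $p$-subgroup of $HN$ has the form $H^n$ for some $n \in N$. Given $x$ in the above intersection, $x$ normalizes $HN$, so $H^x$ is a Sylow $p$-subgroup of $HN$; choosing $n \in N$ with $H^{xn} = H$ places $xn$ in $H^g N \cap N_{TN}(H)$. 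A second Sylow/Hall argument -- applied in $N_{TN}(HN)\cdot N$ and using that $N$ is a normal $p'$-subgroup, so every $p$-element lies in a Sylow $p$-subgroup contained in $N_T(HN)$ -- then extracts from this information an element of $H^g \cap N_T(H)$ to which the hypothesis applies, yielding $x \in HN$.

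The step I expect to be most delicate is this final reduction in (3): the Sylow adjustment by $n$ deposits $xn$ in $H^g N \cap N_{TN}(H)$ rather than directly in $H^g \cap N_T(H)$, and bridging this gap requires a careful compatibility argument between the semidirect decomposition $HN = H \ltimes N$ and the complement $T$. The coprimality hypothesis is precisely what supplies the Hall-type structure that makes these Sylow adjustments available.
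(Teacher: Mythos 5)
The paper itself offers no proof of this lemma --- it is quoted verbatim from Wei and Guo \cite{05} --- so there is no in-paper argument to measure you against; I will assess your argument on its own terms. Parts (1) and (2) are correct and complete: the Dedekind reduction $K=K\cap HT=H(T\cap K)$ together with $N_{T\cap K}(H)\leq N_T(H)$ settles (1), and in (2) the identities $(H/N)^{gN}=H^g/N$, $N_{TN}(H)=N_T(H)N$ (valid because $N\leq H\leq N_G(H)$) and your element-wise verification of $H^g\cap N_T(H)N\leq H$ are exactly what is needed; the converse direction is indeed a routine reversal.

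Part (3), however, has a genuine gap, located precisely where you flag it, and it is not merely a matter of polish. Two ingredients are missing. First, the assertion that ``every $p$-element lies in a Sylow $p$-subgroup contained in $N_T(HN)$'' is not the statement you need; the correct observation is that $N_T(H)=T\cap N_{TN}(H)$ is \emph{normal} in $N_{TN}(H)$ of $p'$-index (since $N_{TN}(H)/N_T(H)$ embeds in $TN/T\cong N/(N\cap T)$, a $p'$-group), and therefore contains every $p$-element of $N_{TN}(H)$. Second, and more seriously, the element $xn$ you produce is in general not a $p$-element, so no Sylow adjustment applied to $xn$ alone can ``extract an element of $H^{g}\cap N_T(H)$'' from it. The standard repair is to pass from the element to the subgroup $Y:=H^gN\cap N_{TN}(H)$: here $Y\cap N\unlhd Y$ is a $p'$-group and $Y/(Y\cap N)$ is a $p$-group, so $Y=(Y\cap N)P_0$ for any Sylow $p$-subgroup $P_0$ of $Y$; conjugating $P_0$ inside $H^gN$ into $H^{gn'}$ for some $n'\in N$ and invoking the first observation gives $P_0\leq H^{gn'}\cap N_T(H)\leq H$, whence $Y\leq NH=HN$. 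Combined with your (correct) reduction $N_{TN}(HN)=N_{TN}(H)N$ via Sylow conjugacy in $HN$, this yields $(HN)^g\cap N_{TN}(HN)\leq HN$ and completes (3). Without this subgroup-level argument the proof of (3) does not close.
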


\begin{Lemma}\label{2.2a}$[\ref{05},~\rm{Theorem~3.7}]$
Let p be the smallest prime dividing $|G|$ and let P be
a Sylow $p$-subgroup of G. Then G is p-nilpotent if every maximal subgroup of P is an
$\mathscr{H}C$-subgroup of G.
\end{Lemma}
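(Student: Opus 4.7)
The plan is to argue by contradiction via a minimal counterexample. Suppose $G$ is a counterexample of minimal order, so $G$ is not $p$-nilpotent while every maximal subgroup of $P$ is an $\mathscr{H}C$-subgroup of $G$. Since Lemma 2.2(1) ensures that this hypothesis passes to any intermediate subgroup $P \leq H \leq G$, the minimality of $|G|$ forces every proper subgroup of $G$ containing $P$ to be $p$-nilpotent. Using Lemma 2.2(3) one can also arrange $O_{p'}(G) = 1$, since both the hypothesis and the failure of $p$-nilpotency descend cleanly to $G/O_{p'}(G)$.

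The next step is to reduce $G$ to a minimal non-$p$-nilpotent group in the sense of It\^o. Exploiting the fact that $p$ is the smallest prime dividing $|G|$ (so that no prime divides $p-1$, forcing $N_G(P)/C_G(P)$ to be a $p$-group whenever $N_G(P)$ is $p$-nilpotent), together with standard transfer arguments, one obtains $P \unlhd G$, a decomposition $G = P \rtimes Q$ with $Q$ a non-normal Sylow $q$-subgroup for some $q \neq p$, and the fact that $P/\Phi(P)$ is a chief factor of $G$. Moreover, $P$ must be non-cyclic, for otherwise Burnside's normal $p$-complement theorem (once more using that $p$ is the smallest prime) would already give the $p$-nilpotency of $G$.

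Now fix a maximal subgroup $M$ of $P$. By hypothesis there is $T \unlhd G$ with $G = MT$ and $M^g \cap N_T(M) \leq M$ for every $g \in G$. If $T = G$, then $M$ is an $\mathscr{H}$-subgroup of $G$; being contained in the normal Sylow subgroup $P$, $M$ is subnormal in $G$, so Lemma 2.1(1) gives $M \unlhd G$. Because $P$ is non-cyclic, $\Phi(P) < M$, and therefore $M/\Phi(P)$ is a proper nontrivial $G$-invariant subgroup of $P/\Phi(P)$, contradicting the chief factor property. If instead $T < G$, then $P \not\leq T$ (otherwise $G = MT = T$), so $T \cap P$ is a proper $G$-invariant subgroup of $P$; the chief factor condition forces $T \cap P \leq \Phi(P) \leq M$. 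Combining this with $G = MT$ and $M \leq P$ yields $P = M(T \cap P) \leq M$, a final contradiction.

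The decisive difficulty is the second phase, establishing the normality of $P$ in the minimal counterexample. The hypothesis is inherited only by subgroups containing $P$, not by all proper subgroups, so one cannot directly invoke the classical structure theorem for minimal non-$p$-nilpotent groups and must instead weld the inherited $p$-nilpotency of overgroups of $P$ onto the arithmetic obstruction furnished by $p$ being the smallest prime. Once $P \unlhd G$ and $P/\Phi(P)$ is known to be a chief factor, the analysis of a single maximal subgroup through the $\mathscr{H}C$-property produces both contradictions with very little further work.
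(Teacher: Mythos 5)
The paper does not prove this statement itself (it is quoted from \cite[Theorem 3.7]{05}), so your attempt can only be judged on its own merits, and it has a genuine gap exactly where you flag ``the decisive difficulty'': the reduction to a minimal non-$p$-nilpotent group. In phase two you claim that from (i) every proper subgroup of $G$ containing $P$ is $p$-nilpotent, (ii) $O_{p'}(G)=1$, and (iii) $p$ is the smallest prime divisor of $|G|$, one obtains $P\unlhd G$, $G=P\rtimes Q$ and $P/\Phi(P)$ a chief factor by ``standard transfer arguments.'' This implication is false: in $G=\mathrm{PSL}(2,17)$ the Sylow $2$-subgroup $P\cong D_{16}$ is itself a maximal subgroup of $G$, so the only proper subgroup containing $P$ is $P$, which is trivially $2$-nilpotent; $O_{2'}(G)=1$ and $2$ is the smallest prime, yet $P$ is far from normal. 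The observation that $N_G(P)/C_G(P)$ is a $p$-group when $N_G(P)<G$ is correct but does not yield $p$-nilpotency (Burnside needs $P\leq Z(N_G(P))$, Frobenius needs control of \emph{all} $p$-local subgroups, not just $N_G(P)$), and your phase two never uses the $\mathscr{H}C$-hypothesis, so it cannot possibly exclude such simple-group configurations.

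To close the gap one must bring the hypothesis to bear before the structure of $G$ is pinned down. This is visible in the paper's own proof of Theorem \ref{3.1}, which faces the identical obstruction: there the nearly-simple case is eliminated in step (4) by showing that a maximal subgroup of $P$ would be an $\mathscr{H}$-subgroup normal in $P$ with $2$-nilpotent normalizer, and then invoking Goldschmidt's theorem on strongly closed $2$-subgroups (Lemma 2.4) to force it to be a full Sylow $2$-subgroup --- a contradiction. Your argument has no counterpart to this step. By contrast, your phase three (the analysis of a single maximal subgroup $M$ via the normal subgroup $T$ with $G=MT$ and $M^g\cap N_T(M)\leq M$, splitting into the cases $T=G$ and $T<G$ against the chief factor $P/\Phi(P)$) is correct and clean \emph{conditional on} $P\unlhd G$ and the chief-factor property; the proof stands or falls on the unproved structural reduction.
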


\begin{Lemma}\textup{\cite[Corollary B3]{30}}
Suppose that $S$ is a \textup{2}-subgroup of $G$ such that $S$ is an
$\mathscr{H}$-subgroup of G and $N_G(S)/C_G(S)$ is a \textup{2}-group. Then $S$ is a Sylow \textup{2}-subgroup of $S^G$.
\end{Lemma}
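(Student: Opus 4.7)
The plan is to argue by contradiction. Set $N = S^G$, so $N \trianglelefteq G$ and $S$ is a 2-subgroup of $N$; the goal is $S \in \mathrm{Syl}_2(N)$. Suppose not and choose $P \in \mathrm{Syl}_2(N)$ with $S \lneq P$. Since $P$ is a 2-group, $S \lneq N_P(S)$, so I may pick a 2-element $t \in N_P(S) \setminus S$. Then $t \in N_G(S) \cap N$.

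Observe next that the $\mathscr{H}$-hypothesis $S^g \cap N_G(S) \leq S$ for all $g \in G$ asserts precisely that no element of $N_G(S) \setminus S$ is $G$-conjugate to an element of $S$. Hence it will suffice to exhibit some $g \in G$ with $t \in S^g$: then $t \in S^g \cap N_G(S) \leq S$, contradicting $t \notin S$.

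The crux is therefore to show that the 2-element $t \in N$ is $G$-conjugate to an element of $S$. Here the hypothesis that $N_G(S)/C_G(S)$ is a 2-group enters essentially: it ensures every odd-order element of $N_G(S)$ centralizes $S$, so the automorphism action of $N_G(S)$ on $S$ is a 2-group. I would embed $\langle t\rangle S$ in a Sylow 2-subgroup $P^{*}$ of $G$. The $\mathscr{H}$-property yields strong closure of $S$ in $N_{P^{*}}(S)$ with respect to $G$, and the 2-group hypothesis on $N_G(S)/C_G(S)$ is then exactly what permits an Alperin-type fusion argument to propagate this local strong closure to all of $P^{*}$. Since $N = S^G$ is generated by 2-elements (the $G$-conjugates of $S$), Sylow theory inside $N$ combined with the strong closure of $S$ in $P^{*}$ forces $t$ to be $G$-conjugate into $S$, giving the desired contradiction.

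The main obstacle is the fusion upgrade in the last step, namely passing from strong closure of $S$ in $N_{P^{*}}(S)$ to strong closure in the full Sylow 2-subgroup $P^{*}$; this is the technical heart of Corollary B3 of Goldschmidt \cite{30} and depends on his 2-fusion machinery, with the hypothesis on $N_G(S)/C_G(S)$ playing the key role in controlling the relevant 2-local normalizers.
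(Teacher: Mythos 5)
The paper offers no proof of this statement at all: it is quoted as Corollary B3 of Goldschmidt's Annals paper \cite{30}, so the only thing to measure your attempt against is that (deep) external result. Your reduction is fine as far as it goes: choose $P\in\mathrm{Syl}_2(S^G)$ with $S<P$, produce $t\in N_P(S)\setminus S$ by normalizer growth in the $2$-group $P$, and note that the $\mathscr{H}$-condition $S^g\cap N_G(S)\leq S$ says exactly that no element of $N_G(S)\setminus S$ lies in any conjugate $S^g$, so it suffices to conjugate $t$ into $S$. But this only restates what must be proved, and the step you yourself label ``the crux'' is left entirely unestablished. Asserting that the $\mathscr{H}$-property gives strong closure of $S$ in $N_{P^*}(S)$ and that the hypothesis on $N_G(S)/C_G(S)$ ``permits an Alperin-type fusion argument to propagate this local strong closure to all of $P^*$'' is not an argument: that local-to-global passage is precisely the content of Goldschmidt's Theorem B and its corollaries, and it is not a routine application of Alperin's fusion theorem --- it rests on the classification of strongly closed abelian $2$-subgroups (Theorem A of \cite{30}), which in turn invokes Glauberman's $Z^*$-theorem and detailed information about simple groups. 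Naming the machinery does not discharge the obligation.

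There is a second, independent gap at the end: even if one grants that $S$ is strongly closed in $P^*\in\mathrm{Syl}_2(G)$ with respect to $G$, your claim that ``Sylow theory inside $N$ combined with the strong closure of $S$ in $P^*$ forces $t$ to be $G$-conjugate into $S$'' is itself a nontrivial fusion assertion that you do not justify; it does not follow merely from $S^G$ being generated by conjugates of $S$. In short, your write-up correctly locates the difficulty and then explicitly concedes it is unresolved, so as a proof it fails exactly at the theorem's mathematical content. Since the paper treats this lemma as a black-box citation, the honest course is to cite Goldschmidt as the authors do, rather than to present an outline whose essential step is missing.
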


\begin{Lemma}\textup{\cite[Theorem 1]{10}}
Let P be a Sylow p-subgroup of G. Then the following two statements are true:\par
\textup{(1)} If $p$ is odd and every minimal subgroup of $P$ lies in $Z(N_G(P))$, then $G$ is $p$-nilpotent.\par
\textup{(2)} If $p=2$ and every cyclic subgroup of $P$ of order $2$ or $4$ is quasinormal in $N_G(P)$, then $G$ is $2$-nilpotent.\par
\end{Lemma}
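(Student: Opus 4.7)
The plan is to prove both parts by a minimal counterexample argument combined with Frobenius' normal $p$-complement criterion: a group $G$ is $p$-nilpotent if and only if $N_G(Q)/C_G(Q)$ is a $p$-group for every non-identity $p$-subgroup $Q$ of $G$.

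For part $(1)$, let $G$ be a counterexample of minimum order. I would first observe that the hypothesis is inherited by every subgroup $M$ with $N_G(P) \leq M \leq G$, since $P$ remains Sylow in $M$ and $N_M(P) = N_G(P)$. Next, the hypothesis forces every $p'$-element of $N_G(P)$ to centralize $\Omega_1(P)$, the subgroup generated by the elements of order $p$. Invoking the classical fact that, for odd $p$, an automorphism of a $p$-group of order coprime to $p$ that centralizes $\Omega_1$ must be trivial, one deduces that every $p'$-element of $N_G(P)$ centralizes $P$, so $N_G(P)/C_G(P)$ is a $p$-group. To extend this control to all $p$-subgroups of $G$ and invoke Frobenius, I would pass to a minimal non-$p$-nilpotent subgroup $S \leq G$ (a Schmidt subgroup): by Schmidt's classification $S = P_0 \rtimes Q_0$ with $P_0$ an elementary abelian normal $p$-subgroup (using that $p$ is odd) and $Q_0$ cyclic of prime-power order coprime to $p$. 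After conjugation one may assume $P_0 \leq P$, so $P_0 \leq \Omega_1(P) \leq Z(N_G(P))$; a fusion/transfer argument then forces $Q_0$ to centralize $P_0$, contradicting the non-nilpotence of $S$.

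For part $(2)$ the outline is the same, but the Schmidt subgroup may satisfy $\exp(P_0) = 4$ rather than $2$. The hypothesis that every cyclic subgroup of $P$ of order $2$ or $4$ is quasinormal in $N_G(P)$ plays the substitute role of the centrality condition: quasinormal $p$-subgroups are subnormal, and combined with the restricted action of $p'$-elements on $p$-groups of exponent at most $4$, one again forces $Q_0$ to act trivially on enough generators of $P_0$ to obtain a contradiction.

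The essential obstacle in both cases is the bridge from a hypothesis stated only at the Sylow level, in terms of $\Omega_1(P)$ or of cyclic subgroups of order $2,4$ inside $N_G(P)$, to a global $p$-nilpotency conclusion, which via Frobenius' criterion demands control of $N_G(Q)/C_G(Q)$ for every $p$-subgroup $Q \leq P$. This bridge is supplied by the automorphism lemma for $p$-groups in case $(1)$, and by the permutability and subnormality properties of quasinormal subgroups in case $(2)$.
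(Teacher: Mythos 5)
First, note that the paper does not prove this statement at all: it is Lemma 2.5, quoted verbatim from S.~Li's paper \cite[Theorem 1]{10}, so there is no internal proof to compare your attempt against. Judged on its own merits, your outline for part (1) is viable. The reduction to a minimal non-$p$-nilpotent (Schmidt) subgroup $S=P_0\rtimes Q_0$ with $P_0\leq P$ after conjugation is standard, and the ``fusion/transfer argument'' you invoke can in fact be completed: since $\exp(P_0)=p$, one has $P_0\leq \mathnormal{\Omega}_1(P)\leq Z(N_G(P))\cap P\leq Z(P)$, so for $x\in P_0$ and $q\in Q_0$ the singletons $\{x\}$ and $\{x^q\}$ are normal subsets of $P$ that are $G$-conjugate, hence $N_G(P)$-conjugate by Burnside's fusion lemma, hence equal because $N_G(P)$ centralizes $\mathnormal{\Omega}_1(P)$; thus $[P_0,Q_0]=1$, contradicting non-nilpotence of $S$. (A minor slip: Schmidt's classification gives $\exp(P_0)=p$, not that $P_0$ is elementary abelian --- $P_0$ may be nonabelian in general; here it happens to be abelian only because the hypothesis forces $P_0\leq Z(N_G(P))$.)

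Part (2), however, contains a genuine gap rather than a compressed argument. The entire difficulty of the $p=2$ case is that the hypothesis (quasinormality of cyclic subgroups of order $2$ or $4$) is imposed only inside $N_G(P)$, whereas the complement $Q_0$ of the Schmidt subgroup need not lie in $N_G(P)$, so quasinormality gives you no direct handle on how $Q_0$ acts on $P_0$. The mechanism that rescued part (1) is also unavailable: quasinormality in $N_G(P)$ yields only that $O^2(N_G(P))$ normalizes (hence centralizes) each such cyclic subgroup, which makes $N_G(P)$ itself $2$-nilpotent but does \emph{not} place $\mathnormal{\Omega}_2(P)$, or $P_0$, inside $Z(P)$; consequently $\{x\}$ and $\{x^q\}$ need not be normal subsets of $P$ and Burnside's lemma does not apply. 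Your concluding sentence --- that one ``forces $Q_0$ to act trivially on enough generators of $P_0$'' --- is exactly the statement that needs to be proved, and neither subnormality of quasinormal subgroups nor the automorphism lemma for groups of exponent at most $4$ supplies it. As written, part (2) restates the obstacle you correctly identify but does not overcome it; a complete proof (as in Li's original paper) requires a genuinely different, more delicate induction to bridge from the Sylow normalizer to $G$.
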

Let $F^*(G)$ denote the generalized Fitting subgroup of $G$, that is, the largest
normal quasinilpotent subgroup of $G$. The following basic facts can be found in [\ref{12}, Chapter X].\par
\begin{Lemma}\label{2.7}
$(1)$ If $N$ is a normal subgroup of $G$, then $F^*(N)=N\cap F^*(G)$.

$(2)$ $F(G)\leq F^*(G)=F^*(F^*(G))$. If $F^*(G)$ is solvable, then $F^*(G)=F(G)$.

$(3)$ $C_G(F^*(G))\leq F(G)$.

$(4)$ If $G>1$, then $F^*(G)>1$. In fact, $F^*(G)/F(G)=soc(F(G)C_G(F(G))/F(G))$.
\end{Lemma}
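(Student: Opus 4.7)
The plan is to work from the standard internal characterization $F^*(G) = F(G)E(G)$, where $E(G)$ is the \emph{layer} of $G$, generated by all components (subnormal quasisimple subgroups). Components pairwise commute and each centralizes $F(G)$, so $F(G)E(G)$ is normal, quasinilpotent, and in fact contains every normal quasinilpotent subgroup of $G$ (one checks this by inspecting the chief factors of such a subgroup). With this description in hand, parts (1), (2), and (4) reduce to tracking how components and nilpotent normal subgroups behave under the passage to normal subgroups or quotients, while part (3) is where the real work lies.

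For (1), note that $N \cap F^*(G)$ is normal in $N$ and quasinilpotent (quasinilpotency passes to normal subgroups via the component decomposition), hence contained in $F^*(N)$; conversely, any component of $N$ is subnormal and quasisimple in $G$, hence a component of $G$, giving $E(N) \leq N \cap E(G)$, while $F(N) \leq N \cap F(G)$ is immediate. Part (2) then follows quickly: nilpotent implies quasinilpotent, so $F(G) \leq F^*(G)$; $F^*(G)$ is normal and quasinilpotent in itself, so $F^*(G) \leq F^*(F^*(G))$ with the reverse inclusion trivial; and if $F^*(G)$ is solvable then $E(G) = 1$ (components are perfect with non-abelian simple quotient), which forces $F^*(G) = F(G)$.

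The heart of the argument is (3). I would first prove the stronger statement $C_G(F^*(G)) \leq F^*(G)$, from which $C_G(F^*(G)) \leq Z(F^*(G)) \leq F(G)$ follows, since $Z(F^*(G))$ is an abelian normal (hence nilpotent) subgroup of $G$. Suppose for contradiction that $C = C_G(F^*(G)) \not\leq F^*(G)$, and pick a minimal normal subgroup $M/F^*(G)$ of $G/F^*(G)$ contained in $CF^*(G)/F^*(G)$. Then $M = (M \cap C) F^*(G)$, with $M \cap C$ centralizing $F^*(G)$; the chief factor $M/F^*(G)$ is either elementary abelian or a direct product of isomorphic non-abelian simple groups. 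In the first case $M$ is a central product of $F^*(G)$ with an abelian $p$-group and is therefore quasinilpotent; in the second case, $M \cap C$ supplies new components of $G$ and again $M = F(G) E(M)$ is quasinilpotent. Either way $M$ is a strictly larger normal quasinilpotent subgroup of $G$, contradicting the maximality of $F^*(G)$.

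Part (4) then drops out. If $G > 1$ but $F^*(G) = 1$, then by (3), $G = C_G(F^*(G)) \leq F(G) \leq F^*(G) = 1$, a contradiction. The socle description $F^*(G)/F(G) = \mathrm{soc}(F(G) C_G(F(G))/F(G))$ follows because components of $G$ centralize $F(G)$, giving $E(G) \leq C_G(F(G))$, while any minimal normal subgroup of $F(G)C_G(F(G))/F(G)$ not already coming from a component would, as in the proof of (3), produce a larger normal quasinilpotent subgroup of $G$ than $F^*(G)$, again contradicting maximality. The main obstacle throughout is making the contradiction in (3) rigorous: one must verify carefully, in both the abelian and non-abelian cases for $M/F^*(G)$, that the candidate subgroup $M$ really is quasinilpotent, which is where the chief-factor criterion for quasinilpotency is essential.
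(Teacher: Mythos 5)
The paper offers no proof of this lemma at all---it is quoted as a list of known facts from Huppert--Blackburn, \emph{Finite Groups III}, Chapter X---so there is nothing internal to compare against; your argument via the decomposition $F^*(G)=F(G)E(G)$ into Fitting subgroup and layer is exactly the standard development found in that reference, and it is correct in outline, including the key maximality contradiction in (3). The one inaccuracy is in the abelian case of (3): $M\cap C$ need not be an abelian $p$-group, since $(M\cap C)\cap F^*(G)\leq Z(F^*(G))$ may involve other primes; it is, however, a central extension of an elementary abelian $p$-group by that central intersection, hence nilpotent of class at most $2$, so $M$ is still a central product of quasinilpotent groups and your contradiction goes through unchanged.
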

\begin{Lemma}\textup{\cite[Lemma 2.5]{05}}
Let $K$ be a normal subgroup of $G$ and let $H$ be a normal subgroup of $K$. If $H$ is an $\mathscr{H}C$-subgroup of G, then $H$ is c-normal in $G$.
\end{Lemma}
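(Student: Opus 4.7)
The plan is to start with the normal subgroup $T \unlhd G$ given by the $\mathscr{H}C$-property: $G = HT$ and $H^g \cap N_T(H) \leq H$ for all $g \in G$. The same $T$ should serve as the witness for c-normality of $H$, so the whole task reduces to showing $H \cap T \leq H_G$.

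The first observation I would use is the chain $H \unlhd K \unlhd G$. Since $K$ is normal in $G$, every conjugate $H^g$ stays inside $K$; and since $H$ is normal in $K$, we have $K \leq N_G(H)$, hence $K \cap T \leq N_T(H)$. Combining these, $H^g \cap T \leq K \cap T \leq N_T(H)$, so
\[
H^g \cap T \;=\; H^g \cap (K \cap T) \;\leq\; H^g \cap N_T(H) \;\leq\; H
\]
by the $\mathscr{H}C$-hypothesis. This is the key calculation, and it is really just a repackaging of the two normality facts together with the defining inclusion of an $\mathscr{H}C$-subgroup; I do not expect a genuine obstacle here, since everything is forced by the hypotheses.

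Next I would translate this into the statement that $H \cap T$ lies in every conjugate of $H$. Because $T \unlhd G$, one has $(H \cap T)^{g^{-1}} = H^{g^{-1}} \cap T$, and the previous step (applied with $g^{-1}$ in place of $g$) gives $(H \cap T)^{g^{-1}} \leq H$, i.e.\ $H \cap T \leq H^g$ for every $g \in G$. Intersecting over all $g$ yields $H \cap T \leq H_G$.

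Finally, combining $G = HT$, $T \unlhd G$, and $H \cap T \leq H_G$ gives exactly the definition of $H$ being c-normal in $G$, finishing the proof. The only subtle point is the direction-of-conjugation bookkeeping in the last step (one has to pass from ``$H^g \cap T \leq H$ for all $g$'' to ``$H \cap T \leq H^g$ for all $g$''), but this is handled automatically by the normality of $T$, so no real difficulty arises.
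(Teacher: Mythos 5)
Your proof is correct: the chain $H\unlhd K\unlhd G$ gives $H^g\leq K$ and $K\cap T\leq N_T(H)$, so $H^g\cap T\leq H^g\cap N_T(H)\leq H$ for all $g$, and normality of $T$ converts this into $H\cap T\leq H_G$, which together with $G=HT$ is exactly c-normality. The paper states this lemma only as a citation of \cite[Lemma 2.5]{05} and gives no proof of its own, but your argument is the natural (and essentially forced) one, so there is nothing to compare beyond noting that it is complete and correct.
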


\begin{Lemma}\label{2.4}
Let $H$ be a $p$-subgroup of G.
If $H$ is an $\mathscr{H}C$-subgroup of $G$ and $H$ is not an $\mathscr{H}$-subgroup of $G$, then G has a normal subgroup $M$ such that
$|G:M|=p$ and $G=HM$.
\end{Lemma}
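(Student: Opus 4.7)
The plan is to unpack the two hypotheses and exploit the fact that $H$ is a $p$-group to force the witnessing normal subgroup from the $\mathscr{H}C$-condition to be proper, then use the standard property that maximal subgroups of a $p$-group are normal of index $p$.

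First, I would invoke the definition: since $H$ is an $\mathscr{H}C$-subgroup of $G$, there exists $T\unlhd G$ with $G=HT$ and $H^g\cap N_T(H)\leq H$ for all $g\in G$. The first key observation is that $T$ must be a proper subgroup of $G$. Indeed, if $T=G$, then $N_T(H)=N_G(H)$, and the condition $H^g\cap N_T(H)\leq H$ for all $g\in G$ becomes exactly the definition of $H$ being an $\mathscr{H}$-subgroup of $G$, contradicting the hypothesis. Hence $T<G$.

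Next, I would use that $H$ is a $p$-group to pin down the structure of $G/T$. From $G=HT$ and the second isomorphism theorem, $G/T\cong H/(H\cap T)$, which is a homomorphic image of the $p$-group $H$, hence a $p$-group; since $T<G$ this quotient is nontrivial. Therefore $G/T$ is a nontrivial finite $p$-group, and by the elementary fact that every maximal subgroup of a finite $p$-group has index $p$ and is normal, we can pick a maximal subgroup of $G/T$ of the form $M/T$ with $M/T\unlhd G/T$ and $|G/T:M/T|=p$.

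Finally, I would read off the conclusion: the correspondence theorem gives $M\unlhd G$ with $|G:M|=p$, and since $T\leq M$ we have $G=HT\leq HM\leq G$, so $G=HM$, as required. The main (only) subtlety is recognizing that negating the $\mathscr{H}$-subgroup property is exactly what forces $T\neq G$; once that is noticed, the rest is a direct application of the structure of $p$-groups. There is no serious obstacle in this argument.
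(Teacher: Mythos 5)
Your proposal is correct and follows essentially the same route as the paper's proof: conclude $T<G$ from the failure of the $\mathscr{H}$-property, observe $G/T\cong H/(H\cap T)$ is a nontrivial $p$-group, and pull back a maximal subgroup of $G/T$ to obtain $M$. The paper's version is just a more terse rendering of the same argument.
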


\begin{proof}
By hypothesis, $G$ has a normal subgroup $T$ such that $G=HT$ and $H^g\cap N_T(H)\leq H$
for all $g\in G$. Since $H$ is not an $\mathscr{H}$-subgroup of $G$, we have that $T<G$. Hence $G/T$ is a $p$-group,
and so $G$ has a normal subgroup $M$ containing $T$ such that $|G:M|=p$ and $G=HM$.
\end{proof}

\begin{Lemma}\textup{\cite[Lemma 2.9]{23}}
Let $\mathfrak{F}$ be a saturated formation containing all supersolvable groups and let $G$ be a group with a normal subgroup $E$ such that $G/E\in \mathfrak{F}$. If $E$ is cyclic, then $G\in \mathfrak{F}$.
\end{Lemma}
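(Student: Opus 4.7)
The plan is to induct on $|E|$. The base case $|E|=1$ is trivial because then $G=G/E\in\mathfrak{F}$. For the inductive step, I use that a cyclic group has a unique characteristic subgroup of each order dividing its own; in particular, $E$ has a characteristic subgroup $L$ of prime order. Since $L$ is characteristic in the normal subgroup $E$, it is normal in $G$. Now $E/L$ is cyclic and normal in $G/L$, and $(G/L)/(E/L)\cong G/E\in\mathfrak{F}$, so the inductive hypothesis applied to $G/L$ yields $G/L\in\mathfrak{F}$. This reduces the problem to proving the statement under the additional assumption that $|E|=p$ is prime.

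With $|E|=p$, I would split on whether $E\leq\Phi(G)$. If $E\leq\Phi(G)$, then $G/\Phi(G)$ is a homomorphic image of $G/E\in\mathfrak{F}$, hence lies in $\mathfrak{F}$; since $\mathfrak{F}$ is saturated, $G\in\mathfrak{F}$. Suppose instead $E\not\leq\Phi(G)$, so there is a maximal subgroup $M$ of $G$ not containing $E$. Then $G=EM$, and since $|E|$ is prime, $E\cap M=1$, giving $G=E\rtimes M$ with $M\cong G/E\in\mathfrak{F}$. Observe also that $G/C_G(E)$ embeds into $\mathrm{Aut}(E)$, which is cyclic of order $p-1$, so $G/C_G(E)$ is abelian and hence lies in $\mathfrak{U}\subseteq\mathfrak{F}$.

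The main obstacle is the final assembly in the second case: one must combine the $\mathfrak{F}$-quotients $G/E$ and $G/C_G(E)$ with the structure of $E$ as a minimal normal subgroup of prime order to conclude $G\in\mathfrak{F}$. The cleanest route is to notice that $E$ lies in the supersolvable hypercenter $Z_{\mathfrak{U}}(G)$, being itself a cyclic $G$-chief factor, and then invoke the standard fact that for a saturated formation $\mathfrak{F}\supseteq\mathfrak{U}$, if $N\trianglelefteq G$ with $N\leq Z_{\mathfrak{U}}(G)$ and $G/N\in\mathfrak{F}$, then $G\in\mathfrak{F}$. Alternatively one could argue directly from the local formation function $f$ of $\mathfrak{F}$: the hypothesis $\mathfrak{U}\subseteq\mathfrak{F}$ forces $f(p)$ to contain all abelian groups of exponent dividing $p-1$, and a direct verification of the local definition at the prime $p$ using $G/C_G(E)\in f(p)$ completes the proof.
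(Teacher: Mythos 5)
This lemma is not proved in the paper at all: it is quoted verbatim from Guo--Skiba (reference [11] in the bibliography, their Lemma 2.9), so there is no in-paper argument to compare against. Judged on its own, your proof is correct and is essentially the standard argument for this fact. The reduction by induction on $|E|$ to the case $|E|=p$ is clean (a cyclic group has a characteristic subgroup of prime order), the $\Phi(G)$-case uses saturation exactly as it should, and in the complemented case the observation that $G/C_G(E)$ embeds in $\mathrm{Aut}(E)\cong C_{p-1}$ is the right key point. Two small remarks. First, the case split on $E\leq\Phi(G)$ is actually unnecessary for your preferred route: since $E$ has prime order, $E/1$ is a cyclic chief factor, so $E\leq Z_{\mathfrak{U}}(G)$ in either case, and the hypercenter lemma finishes the proof directly. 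Second, the finishing step does lean on machinery of comparable depth to the statement itself --- either the fact that $N\leq Z_{\mathfrak{U}}(G)\leq Z_{\mathfrak{F}}(G)$ together with ``$G/N\in\mathfrak{F}$ and $N\leq Z_{\mathfrak{F}}(G)$ imply $G\in\mathfrak{F}$'', or the Gasch\"{u}tz--Lubeseder--Schmid characterization via the canonical local definition (where one needs that $\mathfrak{U}\subseteq\mathfrak{F}$ forces the abelian groups of exponent dividing $p-1$ into $f(p)$). Both are genuinely standard results in Doerk--Hawkes, so citing them is legitimate, but a fully self-contained proof would have to reproduce one of them.
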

\begin{Lemma}\label{2.8}
Let $\mathfrak{F}$ be a saturated formation containing all supersolvable groups. Suppose that M is a subgroup of G such that $|G:M|=p$, $F(G)\nleq M$ and $M\in \mathfrak{F}$. Then $G\in \mathfrak{F}$.
\end{Lemma}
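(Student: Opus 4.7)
I would argue by contradiction on $|G|$ and close with the saturatedness of $\mathfrak{F}$ combined with Lemma 2.9. Let $G$ be a counterexample of smallest order.

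\emph{Pinning down the prime.} From $F(G)=\prod_q O_q(G)\nleq M$, some $O_q(G)\nleq M$. Primality of $|G:M|$ makes $M$ maximal in $G$, so $O_q(G)M=G$ and $p=|G:M|=|O_q(G):O_q(G)\cap M|$ is a $q$-power; hence $q=p$. So $P:=O_p(G)\nleq M$, and $P_0:=P\cap M\lhd G$ with $|P:P_0|=p$.

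\emph{Reducing to a unique minimal normal $N\le P_0$.} For a minimal normal subgroup $L$ of $G$ with $L\nleq M$, a standard argument makes $L\cap M\lhd G$, hence $L\cap M=1$ and $|L|=|G:M|=p$; then $G/L\cong M\in\mathfrak{F}$ and Lemma 2.9 applied to the cyclic $L\lhd G$ forces $G\in\mathfrak{F}$, a contradiction. So every minimal normal $L$ lies in $M$, and for each such $L$ the pair $(G/L,M/L)$ satisfies the hypotheses of the lemma (the nontrivial point is $F(G)L/L\nleq M/L$, which follows from $F(G)\nleq M$ and $L\le M$), so minimality of the counterexample yields $G/L\in\mathfrak{F}$. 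Since $\mathfrak{F}$ is closed under subdirect products, $G$ has a unique minimal normal subgroup $N$; as $P\neq 1$, $N\le P$, and since $N\le M$, $N\le P_0$.

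\emph{Primitive-group step (main obstacle).} I claim $N\le\Phi(G)$. Assume otherwise and pick a maximal subgroup $K$ of $G$ with $N\nleq K$. Then $N\cap K\lhd G$ forces $N\cap K=1$ and $G=NK$; uniqueness of $N$ gives $K_G=1$, so $G$ is primitive. Write $C:=C_G(N)\lhd G$; since $N$ is abelian (a minimal normal $p$-subgroup), $N\le C$. The subgroup $C\cap K$ is centralized by $N$ and normalized by $K$, hence $C\cap K\lhd G$ and therefore $C\cap K\le K_G=1$. Consequently $|C|\le|G:K|=|N|$, forcing $C=N$. Because $N$ is a nontrivial normal subgroup of the nilpotent $F(G)$, we have $N\cap Z(F(G))\neq 1$; this intersection is normal in $G$ and contained in $N$, so minimality of $N$ gives $N\le Z(F(G))$, whence $F(G)\le C_G(N)=N$. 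Thus $F(G)=N\le M$, contradicting $F(G)\nleq M$.

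\emph{Finish.} Combining $N\le\Phi(G)$ with $G/N\in\mathfrak{F}$, saturatedness of $\mathfrak{F}$ forces $G\in\mathfrak{F}$, the final contradiction. The primitive-group computation is the only delicate step: it is the one place where $F(G)\nleq M$ is used beyond identifying the prime $p$, and it relies on the Fitting-centralizer inequality to collapse $F(G)$ onto $N$.
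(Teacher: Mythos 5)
Your overall strategy is sound and genuinely different from the paper's (the paper reduces modulo $\Phi(G)$, invokes the fact that $F(G)$ is a direct product of abelian minimal normal subgroups once $\Phi(G)=1$, picks one not contained in $M$, and applies Lemma 2.9 in one stroke), but there is one step in your argument that is not justified as stated. In the reduction step you assert that for \emph{any} minimal normal subgroup $L$ of $G$ with $L\nleq M$, "a standard argument makes $L\cap M\lhd G$." The standard argument is: $M$ normalizes $L\cap M$ because $L\lhd G$, and $L$ normalizes $L\cap M$ because $L$ is \emph{abelian}; hence $L\cap M\lhd LM=G$. Nothing in the hypotheses forces an arbitrary minimal normal subgroup of $G$ to be abelian (the lemma does not assume $G$ solvable), and a nonabelian minimal normal subgroup can perfectly well have a non-normal subgroup of index $p$ (the model is $A_4<A_5$ with $p=5$). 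So for nonabelian $L$ you cannot conclude $L\cap M=1$ and $|L|=p$, and your subsequent claim that every minimal normal subgroup lies in $M$ is not established.

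The gap is repairable within your own framework, which is why I would call it a lapse rather than a wrong approach: for a minimal normal $L\nleq M$ you do not need $|L|=p$ to get $G/L\in\mathfrak{F}$, since $G=LM$ gives $G/L\cong M/(L\cap M)$, a quotient of $M\in\mathfrak{F}$; combined with the inductive step for $L\leq M$, this still yields a unique minimal normal subgroup $N$, and your observation $N\leq O_p(G)$ shows $N$ \emph{is} abelian, so the "$N\cap M\lhd G$, hence $|N|=p$, hence Lemma 2.9" argument applies legitimately to $N$ in case $N\nleq M$. The rest of your proof (the primitive-group computation forcing $F(G)=N\leq M$, and the saturation finish when $N\leq\Phi(G)$) is correct. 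Note that the paper avoids this issue entirely: after reducing to $\Phi(G)=1$, the minimal normal subgroup it selects outside $M$ is taken from inside $F(G)$ and is therefore automatically abelian, which is exactly the hypothesis your "standard argument" silently needs.
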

\begin{proof}
If $\Phi(G)>1$, then it is easy to see that $G/\Phi(G)$
satisfies the hypothesis of the lemma, and so $G/\Phi(G)\in\mathfrak{F}$ by induction. This implies that $G\in \mathfrak{F}$.
We may, therefore, assume that $\Phi(G)=1$. Then $F(G)=N_1\times N_2\cdots \times N_t$,
where $N_i~(i=1,...,t)$ is a solvable minimal normal subgroup of $G$. Since $F(G)\nleq M$, there exists a solvable minimal normal subgroup $N_i$ of $G$ such that $N_i\nleq M$. Then clearly, $G=N_iM$, and so $N_i\cap M=1$. Therefore, $|N_i|=|G:M|=p$ and $G/N_i\cong M\in \mathfrak{F}$. It follows from Lemma 2.9 that $G\in \mathfrak{F}$.
\end{proof}

\begin{Lemma}\label{2.3}
Let $H$ be an $\mathscr{H}C$-subgroup of $G$.
If $L/\Phi(L)$ is a chief factor of $G$ and $H\leq L$, then $H$ is an $\mathscr{H}$-subgroup of $G$.
\end{Lemma}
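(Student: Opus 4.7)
The plan is to unwrap the definition of $\mathscr{H}C$-subgroup, use Dedekind's modular law inside $L$ to locate the normal complement $T$ relative to $\Phi(L)$, and then split into two cases according to the chief factor $L/\Phi(L)$.

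By hypothesis, there is a normal subgroup $T$ of $G$ with $G=HT$ and $H^g\cap N_T(H)\leq H$ for every $g\in G$. Since $H\leq L$, Dedekind's law gives
\[
L = L\cap HT = H(L\cap T).
\]
Now $L\cap T$ is normal in $G$ (as the intersection of two normal subgroups), and $\Phi(L)$ is normal in $G$ (as a characteristic subgroup of the normal subgroup $L$). Hence $(L\cap T)\Phi(L)/\Phi(L)$ is a normal subgroup of $G/\Phi(L)$ contained in the chief factor $L/\Phi(L)$. Therefore either $(L\cap T)\Phi(L)=\Phi(L)$ or $(L\cap T)\Phi(L)=L$.

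Suppose first that $L\cap T\leq \Phi(L)$. Then $L=H(L\cap T)\leq H\Phi(L)\leq L$, so $L=H\Phi(L)$. By the non-generator property of the Frattini subgroup, this forces $H=L$, so $H$ is normal in $G$ and is therefore trivially an $\mathscr{H}$-subgroup of $G$. Suppose instead that $(L\cap T)\Phi(L)=L$. The same Frattini argument gives $L\cap T=L$, i.e.\ $L\leq T$, so $H\leq T$. In particular $H\leq N_T(H)$, and since $G=HT$ we obtain
\[
N_G(H) = N_G(H)\cap HT = H(N_G(H)\cap T) = H\,N_T(H) = N_T(H).
\]
Applying the $\mathscr{H}C$-condition, for every $g\in G$,
\[
H^g\cap N_G(H) = H^g\cap N_T(H)\leq H,
\]
so $H$ is an $\mathscr{H}$-subgroup of $G$.

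There is no real obstacle beyond making sure the dichotomy coming from the chief factor is set up correctly; once one observes that $L\cap T$ normalizes into $\Phi(L)$ or swallows all of $L$, each case collapses either to $H=L\unlhd G$ or to the identity $N_G(H)=N_T(H)$, and the conclusion follows immediately from the definition of $\mathscr{H}C$-subgroup.
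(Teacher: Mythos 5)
Your proof is correct and follows essentially the same route as the paper: the dichotomy for the normal subgroup $(L\cap T)\Phi(L)/\Phi(L)$ inside the chief factor, with the first case collapsing to $H=L\unlhd G$ via the non-generator property. The only cosmetic difference is in the second case, where the paper notes directly that $L\leq T$ forces $T=HT=G$, while you reach the same conclusion by computing $N_G(H)=N_T(H)$ with Dedekind's law.
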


\begin{proof}
By hypothesis, there exists a normal subgroup $T$ of $G$ such that $G=HT$ and $H^g\cap N_T(H)\leq H$
for all $g\in G$. Since $L/\Phi(L)$ is a chief factor of $G$, either $(L\cap T)\Phi(L)/\Phi(L)=1$
or $(N\cap T)\Phi(L)/\Phi(L)=L/\Phi(L)$. In the former case, $L=H(L\cap T)=H$. This implies that $H\unlhd G$, and so $H$ is an $\mathscr{H}$-subgroup of $G$.
In the latter case, $(L\cap T)\Phi(L)=L$, and so $T=G$. This also implies that
$H$ is an $\mathscr{H}$-subgroup of $G$.
\end{proof}

\begin{Lemma} \textup{\cite[Lemma 2.8]{34}}
Let $P$ be a normal
$p$-subgroup of $G$ contained in $Z_{\infty}(G)$. Then $O^p(G)\leq C_G(P)$.
\end{Lemma}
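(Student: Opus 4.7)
The plan is to construct a $G$-invariant normal series of the $p$-group $P$ on which $G$ acts trivially factor-by-factor, and then to invoke the classical stability theorem of P.~Hall, which states that the stabilizer in $\mathrm{Aut}(P)$ of such a series is itself a $p$-group. Concretely, I would first use that the hypercenter $Z_\infty(G)=\bigcup_{i\ge 0} Z_i(G)$ is the union of the upper central series, so since $P$ is finite there exists $n$ with $P\le Z_n(G)$. Setting $P_i:=P\cap Z_i(G)$ for $0\le i\le n$ produces subgroups normal in $G$ with $P_0=1$ and $P_n=P$. The inclusion $P_i/P_{i-1}\hookrightarrow Z_i(G)/Z_{i-1}(G)\le Z(G/Z_{i-1}(G))$ shows $[G,P_i]\le P_{i-1}$, so
\[
1=P_0\le P_1\le\cdots\le P_n=P
\]
is a $G$-invariant normal series of $P$ all of whose factors are centralized by $G$.

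By the stability theorem applied to the conjugation homomorphism $G\to\mathrm{Aut}(P)$, whose image stabilizes the above series with trivial action on each factor, the image is a $p$-group. Since the kernel of this map is exactly $C_G(P)$, it follows that $G/C_G(P)$ is a $p$-group, and hence $O^p(G)\le C_G(P)$ by the defining property of $O^p(G)$ as the smallest normal subgroup of $G$ with $p$-group quotient. The only nontrivial ingredient is the stability theorem, which is standard; the rest is straightforward bookkeeping, and I expect no real obstacle, as the hypercenter hypothesis is tailor-made to yield the required central series.
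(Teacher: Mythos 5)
Your proof is correct. The paper does not prove this lemma at all --- it is quoted verbatim from Li and Wang \cite[Lemma 2.8]{34} --- so there is no internal argument to compare against; your derivation (intersecting $P$ with the upper central series of $G$ to get a $G$-invariant series with $[G,P_i]\leq P\cap Z_{i-1}(G)=P_{i-1}$, then applying the stability theorem to conclude that $G/C_G(P)$ is a $p$-group) is the standard route and is exactly what one would expect the cited source to do. The only point worth making explicit is how the stability theorem yields that the image of $G$ in $\mathrm{Aut}(P)$ is a $p$-group: the usual statement is that a $p'$-automorphism of a $p$-group stabilizing such a chain is trivial, and one then notes (via Cauchy's theorem) that a finite group with no nontrivial $p'$-elements is a $p$-group.
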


\begin{Lemma}\textup{\cite[Lemma 2.4]{35}}
Let $P$ be a $p$-group. If $\alpha$ is a $p'$-automorphism of $P$ which centralizes $\mathnormal{\Omega}_1(P)$, then $\alpha=1$ unless $P$ is a non-abelian $2$-group. If $[\alpha, \mathnormal{\Omega}_2(P)] = 1$, then $\alpha=1$ without restriction.
\end{Lemma}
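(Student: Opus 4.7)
The plan is to argue by induction on $|P|$, first dispose of the abelian case (where the distinction between $p$ odd and $p=2$ disappears), and then reduce the non-abelian case to showing that $\alpha$ acts trivially on $P/\Phi(P)$, from which Burnside's theorem on coprime action on the Frattini quotient yields $\alpha=1$.

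Suppose first that $P$ is abelian and $[\alpha,\Omega_1(P)]=1$. Since the $p$-th power map is a homomorphism on $P$, for each $x\in P$ the element $z(x):=\alpha(x)x^{-1}$ satisfies $z(x)^p=\alpha(x^p)x^{-p}=x^px^{-p}=1$ (applying induction to the proper characteristic subgroup $P^p$, whose $\Omega_1$ sits inside $\Omega_1(P)$ and is therefore fixed by $\alpha$, so $\alpha$ fixes $P^p$ pointwise). Hence $z(x)\in\Omega_1(P)$, and an easy iteration gives $\alpha^k(x)=x\,z(x)^k$ for every $k\geq 1$. Taking $k=|\alpha|$ forces $z(x)^{|\alpha|}=1$; as $z(x)$ has order dividing $p$ while $\gcd(|\alpha|,p)=1$, we obtain $z(x)=1$ and therefore $\alpha=1$.

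For non-abelian $P$ with $p$ odd, the inductive hypothesis applied to any proper $\alpha$-invariant subgroup $Q<P$ (for which $\Omega_1(Q)\leq \Omega_1(P)$ is centralized by $\alpha$) shows that $\alpha$ is trivial on $Q$; in particular $\alpha$ centralizes $Z(P)$, $\Phi(P)$, and every proper $\alpha$-invariant characteristic subgroup. To finish I would show $\alpha$ acts trivially on $P/\Phi(P)$: for $p$ odd, commutator identities of the form $(xy)^p\equiv x^py^p \pmod{\gamma_2(\langle x,y\rangle)^p\gamma_p(\langle x,y\rangle)}$ imply that every coset of $\Phi(P)$ contains a representative of order $1$ or $p$, hence an element of $\Omega_1(P)$ which is fixed by $\alpha$. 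Therefore $\alpha$ is trivial on $P/\Phi(P)$, and Burnside's theorem delivers $\alpha=1$.

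For the second statement, since $\Omega_1(P)\leq\Omega_2(P)$ the hypothesis $[\alpha,\Omega_2(P)]=1$ subsumes $[\alpha,\Omega_1(P)]=1$, so after the first part only the non-abelian $2$-group case remains. There every coset of $\Phi(P)$ has a representative $x$ with $x^2\in \Phi(P)$, so $x\in\Omega_2(P)$ and is fixed by $\alpha$; thus $\alpha$ is trivial on $P/\Phi(P)$ and Burnside again yields $\alpha=1$. The decisive obstacle throughout is the non-abelian $p$-odd step, namely producing order-$p$ coset representatives modulo $\Phi(P)$ via a regularity/commutator-collection estimate; the necessity of the stronger $\Omega_2$-hypothesis in the $p=2$ case is dramatized by the outer automorphism of order $3$ of $Q_8$, which centralizes $\Omega_1(Q_8)=Z(Q_8)$ but not $Q_8=\Omega_2(Q_8)$.
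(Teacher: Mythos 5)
This statement is imported by the paper as \cite[Lemma 2.4]{35} (Gagen, \emph{Topics in Finite Groups}) with no proof given, so there is nothing internal to compare against; judging your argument on its own terms, the abelian case is correct, but the non-abelian cases contain a genuine gap. The pivotal claim for odd $p$ --- that in a non-abelian $p$-group every coset of $\Phi(P)$ contains an element of order $1$ or $p$ --- is simply false, and the Hall--Petrescu congruence does not deliver it. Take $P=M_{p^3}=\langle a,b\mid a^{p^2}=b^p=1,\ a^b=a^{1+p}\rangle$: here $\Phi(P)=\langle a^p\rangle$ and the coset $a\Phi(P)$ consists entirely of elements of order $p^2$, so it meets $\Omega_1(P)=\langle a^p,b\rangle$ in nothing. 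The $p=2$ step fails for the same reason: $x^2\in\Phi(P)$ holds for \emph{every} $x$ in a $2$-group (since $\Phi(P)=P^2$), but this does not place $x$ in $\Omega_2(P)$; in $M_{16}=\langle a,b\mid a^8=b^2=1,\ a^b=a^5\rangle$ the coset $a\Phi(P)$ consists of elements of order $8$ and misses $\Omega_2(P)=\langle a^2,b\rangle$ entirely.

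What your inductive setup correctly achieves is that a minimal counterexample $\alpha$ (which one may take of prime order $q\neq p$) centralizes every proper $\alpha$-invariant subgroup of $P$. The standard proof then does \emph{not} try to control exponents of cosets in an arbitrary non-abelian $P$; instead it invokes the structure theorem for such minimal nontrivial coprime actions (Gorenstein, \emph{Finite Groups}, Theorem 5.3.7, used to prove Theorem 5.3.10, which is this lemma): in that configuration $P$ is special, i.e.\ $P'=Z(P)=\Phi(P)$ is elementary abelian, and only \emph{then} does the class-$\le 2$ power identity $(xy)^p=x^py^p[y,x]^{\binom{p}{2}}$ force $\exp(P)=p$ for odd $p$ (so $P=\Omega_1(P)$ is centralized, a contradiction) and $\exp(P)\le 4$ for $p=2$ (so $P=\Omega_2(P)$). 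Your commutator-collection remark is exactly the ingredient that works for special groups but not for general non-abelian $P$; without first establishing the special structure of the minimal configuration, the argument does not close. Your closing observation about $Q_8$ is correct and does explain why $\Omega_1$ cannot suffice when $p=2$.
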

\section{Main results}

\begin{Theorem}\label{3.1}
Let $p$ be the smallest prime divisor of $|G|$ and let $P$ be a Sylow $p$-subgroup of $G$. Suppose that $P$ is cyclic or $P$ has a subgroup $D$ with $1<|D|<|P|$ such that every subgroup $H$ of $P$ of order $|D|$ is an $\mathscr{H}C$-subgroup of $G$. When $p=2$ and $|P:D|>2$, suppose further that $H$ is an $\mathscr{H}C$-subgroup of $G$ if there exists $D_1\unlhd H\leq P$ such that $2|D_1|=|D|$ and $H/D_1$ is a cyclic group of order $4$. Then $G$ is $p$-nilpotent.
\end{Theorem}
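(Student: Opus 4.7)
My plan is to argue by minimal counterexample. Let $G$ be a counterexample of smallest order. If $P$ is cyclic, Burnside's $p$-nilpotency theorem (valid because $p$ is the smallest prime divisor of $|G|$) applies, so $P$ is non-cyclic and the subgroup $D$ with $1<|D|<|P|$ as described exists. Standard reductions then yield $O_{p'}(G)=1$: if $N=O_{p'}(G)>1$, Lemma \ref{2.2}(3) transfers the full hypothesis (including the auxiliary $p=2$ clause, since the relevant subgroups are $2$-groups coprime to $|N|$) to $G/N$, which is $p$-nilpotent by minimality, forcing $G$ to be $p$-nilpotent---contradiction. Similarly, Lemma \ref{2.2}(1) shows every proper subgroup of $G$ containing $P$ is $p$-nilpotent.

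\emph{The case $|D|>p$.} I would induct via a minimal normal subgroup $N$ of $G$, which lies in $P$ by the previous reduction. First, rule out $|N|>|D|$: any subgroup of $N$ of order $|D|$ is $\mathscr{H}C$ in $G$ by hypothesis, and since $\Phi(N)=1$ for a minimal normal $N$, Lemma \ref{2.3} upgrades it to an $\mathscr{H}$-subgroup of $G$; the normality part of the first lemma of Section~2 (on $\mathscr{H}$-subgroups that are subnormal) then makes it normal in $G$, contradicting the minimality of $N$. So $|N|\le|D|$, and Lemma \ref{2.2}(2) transfers the hypothesis to $G/N$ with $D/N$ replacing $D$; minimality (or Burnside when $P/N$ is cyclic) gives that $G/N$ is $p$-nilpotent. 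If $N\le\Phi(G)$, the saturated-formation property of $p$-nilpotent groups yields the conclusion. If $N\not\le\Phi(G)$, $N$ is complemented in $G$, and Lemmas \ref{2.7} and \ref{2.8} applied to a complement of index $|N|$ force $G$ itself into the $p$-nilpotent formation---contradiction.

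\emph{The case $|D|=p$.} Every subgroup of $P$ of order $p$ is $\mathscr{H}C$ in $G$; for $p=2$ the auxiliary hypothesis (taking $D_1=1$) additionally forces every cyclic subgroup of $P$ of order $4$ to be $\mathscr{H}C$ in $G$. Inside chief factors of $G$ contained in $P$, Lemma \ref{2.3} upgrades these to $\mathscr{H}$-subgroups, and the Thompson-type minimal-subgroup lemma of the excerpt then yields $p$-nilpotency: for odd $p$ via centralization of $\Omega_1(P)$ by $N_G(P)$, using Lemma \ref{2.7} and the $O^p(G)\le C_G(P)$ lemma once $P$ is pushed into $Z_\infty(G)$; for $p=2$ via quasinormality of cyclic order-$4$ subgroups in $N_G(P)$.

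\emph{Main obstacle.} The hardest step will be the $p=2$ case with $|P:D|>2$. The $p'$-automorphism lemma of the excerpt warns that a $2'$-automorphism of a non-abelian $2$-group $P$ need not be trivial even if it centralizes $\Omega_1(P)$; one must control its action on $\Omega_2(P)$. The auxiliary hypothesis on subgroups $H$ with $H/D_1\cong C_4$ is engineered precisely to supply the quasinormal cyclic order-$4$ subgroups in $N_G(P)$ required for the $p=2$ part of the Thompson-type lemma. Reconciling this extra family with the $\mathscr{H}C$-framework---through the chief-factor upgrade in Lemma \ref{2.3}, the normalizer equality in the first lemma of Section~2, and the $Z_\infty$-control via the lemma of Section~2 on normal $p$-subgroups in $Z_\infty(G)$---is where the real technical work lies, especially in tracking the $C_4$-quotient condition through the reductions of the previous paragraphs without losing it.
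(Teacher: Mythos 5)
Your proposal has a genuine gap at its pivot point: you write that a minimal normal subgroup $N$ of $G$ ``lies in $P$ by the previous reduction,'' but $O_{p'}(G)=1$ does not force a minimal normal subgroup to be a $p$-group --- $N$ could be a direct product of non-abelian simple groups (indeed $G$ itself could a priori be non-abelian simple). Handling this possibility is the real core of the paper's argument and you skip it entirely. The paper first shows $G$ is not non-abelian simple by reducing (via Feit--Thompson) to $p=2$ and invoking Goldschmidt's result (Lemma 2.4 of the paper) that a $2$-subgroup $H$ which is an $\mathscr{H}$-subgroup with $N_G(H)/C_G(H)$ a $2$-group is Sylow in $H^G$; it then proves that \emph{every} proper normal subgroup of $G$ is contained in $O_p(G)$, and from this deduces the key fact that every $\mathscr{H}C$-subgroup of $G$ inside $P$ is actually an $\mathscr{H}$-subgroup (because the normal supplement $T$ cannot be proper). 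Without these steps your induction on $G/N$ has no legal starting point, and your appeal to Lemma \ref{2.3} (the chief-factor upgrade) to make subgroups of $N$ normal presupposes exactly what is missing.

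Two further steps would also fail as written. First, in the case $|N|=|D|$ you cannot ``replace $D$ by $D/N$'' (the hypothesis $1<|D/N|$ is violated); the paper needs a separate argument here, showing that every cyclic subgroup of $P/N$ of order $p$ or $4$ is normal in $N_G(P)/N$ and then applying the Li-type minimal-subgroup criterion (Lemma 2.5), which is where the auxiliary $C_4$-quotient hypothesis is actually consumed --- not in a quasinormality argument in $N_G(P)$ as you sketch. Second, your final descent through Lemmas \ref{2.7} and \ref{2.8} is misapplied: Lemma \ref{2.8} requires a saturated formation containing all supersolvable groups, and the class of $p$-nilpotent groups is not such a formation. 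The paper instead concludes from the $p$-nilpotency of $G/N$ that $G$ has a normal subgroup $M$ of index $p$, checks $|M|_p>|D|$ so that $M$ inherits the hypothesis, and gets $p$-nilpotency of $M$ (hence of $G$) by minimality.
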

\begin{proof}
Suppose that the result is false and let $G$ be a counterexample of minimal order. Then we proceed via the following steps.\par
\medskip
\noindent(1) $P$ is not cyclic and $|P:D|>p$.\par
If $P$ is cyclic, then by \cite[(10.1.9)]{08}, $G$ is $p$-nilpotent, a contradiction. Suppose that $|P:D|=p$. Then every maximal subgroup of $P$ is an $\mathscr{H}C$-subgroup of $G$. Hence by Lemma 2.3, $G$ is $p$-nilpotent, also a contradiction.\par
\medskip
\noindent(2) Every proper subgroup of $G$ containing $P$ is $p$-nilpotent.\par
Let $V$ be any proper subgroup of $G$ containing $P$. Then by Lemma 2.2(1), $V$ satisfies the hypothesis of the theorem. By the choice of $G$, $V$ is $p$-nilpotent. Thus (2) follows.\par
\medskip
\noindent(3) $O_{p'}(G)=1$.\par
If not, then by Lemma 2.2(3), $G/O_{p'}(G)$ satisfies the hypothesis of the theorem. By the choice of $G$, $G/O_{p'}(G)$ is $p$-nilpotent, and so $G$ is $p$-nilpotent, which is impossible.\par
\medskip
\noindent(4) $G$ is not a non-abelian simple group.\par
Assume that $G$ is a non-abelian simple group. Then by Feit-Thompson's Theorem, we have that $p=2$.
Let $H$ be a subgroup of $P$ of order $|D|$. Then clearly, $H$ is an $\mathscr{H}$-subgroup of $G$. Hence by Lemma 2.1(2), $H$ is an $\mathscr{H}$-subgroup of $P$, and thus $H\unlhd P$ by Lemma 2.1(1). It follows from (2) that $N_G(H)$ is $2$-nilpotent for $H\ntrianglelefteq G$, and so $N_G(H)/C_G(H)$ is a 2-group. By Lemma 2.4, $H$ is a Sylow 2-subgroup of $G$, a contradiction. Therefore, $G$ is not a non-abelian simple group.\par
\medskip
\noindent(5) $O_{p}(G)>1$, and every proper normal subgroup of $G$ is contained in $O_p(G)$.\par
Let $L$ be a proper normal subgroup of $G$. Then we only need to prove that $L$ is a $p$-group. By (3), $p\mid|L|$. If $|L|_p>|D|$, then $L$ satisfies the hypothesis of the theorem by Lemma 2.2(1). Hence $L$ is $p$-nilpotent due to the choice of $G$. It follows from (3) that $L$ is a $p$-group. Now consider that $|L|_p\leq |D|$. Then there exists a normal subgroup $K$ of $P$ such that $P\cap L\leq K$ and $|K|=p|D|$. This induces that $|LK|_p=|K|=p|D|$, and so $K$ is a Sylow $p$-subgroup of $LK$. If $LK=G$, then $|P|=|K|=p|D|$, which contradicts (1). Thus $LK<G$. By Lemma 2.2(1), $LK$ satisfies the hypothesis of the theorem. Then by the choice of $G$, $LK$ is $p$-nilpotent, and so is $L$. Hence $L$ is a $p$-group by (3), and consequently (5) holds.\par
\medskip
\noindent(6) Every $\mathscr{H}C$-subgroup of $G$ contained in $P$ is an $\mathscr{H}$-subgroup of $G$.\par
Let $V$ be any $\mathscr{H}C$-subgroup of $G$ contained in $P$. Then there exists a normal subgroup $T$ of $G$ such that $G=VT$ and $V^g \cap N_T(V)\leq V$ for all $g\in G$. By (5), since $G$ is not a $p$-group, we have that $T=G$. Therefore, $V$ is an $\mathscr{H}$-subgroup of $G$.\par
\medskip
\noindent(7) Let $N$ be a minimal normal subgroup of $G$ contained in $O_p(G)$. Then $|N|\leq |D|$ and $G/N$ is $p$-nilpotent.\par
If $|N|>|D|$, then there exists a subgroup $H$ of $N$ of order $|D|$ such that $H$ is an $\mathscr{H}$-subgroup of $G$ by (6). It follows from Lemma 2.1(1) that $H$ is normal in $G$, which is impossible. Hence $|N|\leq |D|$. First suppose that $|N|<|D|$. Then by (6) and Lemma 2.1(3), $G/N$ satisfies the hypothesis of the theorem. By the choice of $G$, $G/N$ is $p$-nilpotent.\par
Now consider that $|N|=|D|$. We claim that every cyclic subgroup of $P/N$ of order prime or 4 (when $p=2$) is normal in $N_G(P)/N$. Let $X/N$ be a subgroup of $P/N$ of order $p$. If $N\leq \Phi(X)$, then $X$ is cyclic, and so is $N$. This implies that $|N|=|D|=p$. Then by (6), every cyclic subgroup of $P$ of order $p$ or 4 (when $p=2$) is an $\mathscr{H}$-subgroup of $G$. By Lemmas 2.1(1) and 2.1(2), every cyclic subgroup of $P$ of order $p$ or 4 (when $p=2$) is normal in $N_G(P)$. Since $p$ is the smallest prime divisor of $|G|$, every minimal subgroup of $P$ lies in $Z(N_G(P))$. Hence by Lemma 2.5, $G$ is $p$-nilpotent, a contradiction. Thus $N\nleq \Phi(X)$, and so $X$ has a maximal subgroup $S$ such that $X=SN$. Since $|S|=|N|=|D|$, $S$ is an $\mathscr{H}$-subgroup of $G$ by (6). By Lemmas 2.1(1) and 2.1(2), $S\unlhd N_G(P)$, and thus $X/N=SN/N\unlhd N_G(P)/N$. This shows that the claim holds when $p$ is odd. Consider that $p=2$. Then by (1), $|P:D|>2$. Let $Y/N$ be a cyclic subgroup of $P/N$ of order $4$. If $N\leq \Phi(Y)$, then $Y$ is cyclic. This implies that $|N|=|D|=2$, a contradiction. Thus $N\nleq \Phi(Y)$, and so $Y$ has a maximal subgroup $U$ such that $Y=UN$. Clearly, $|U|=2|D|$. Since $U/U\cap N\cong Y/N$ is a cyclic group of order 4, by hypothesis and (6), $U$ is an $\mathscr{H}$-subgroup of $G$. A similar discussion as above shows that $Y/N=UN/N\unlhd N_{G}(P)/N$. Hence the claim holds when $p=2$. Since $p$ is the smallest prime divisor of $|G|$, every minimal subgroup of $P/N$ lies in $Z(N_G(P)/N)$. Therefore, $G/N$ is $p$-nilpotent by Lemma 2.5.\par
\medskip
\noindent(8) Final contradiction.\par
By (7), $G/N$ is $p$-nilpotent. Then $G$ has a normal subgroup $M$ of $G$ such that $|G:M|=p$. By (1), $|M|_p>|D|$. Then by (6) and Lemma 2.1(2), $M$ satisfies the hypothesis of the theorem. Hence $M$ is $p$-nilpotent due to the choice of $G$, and so $G$ is $p$-nilpotent. The final contradiction completes the proof.
\end{proof}

The following corollary can be deduced immediately from Lemma 2.2(3) and Theorem 3.1.\par
\begin{Corollary}
Suppose that every
noncyclic Sylow subgroup $P$ $($if exists$)$ of $G$ has a subgroup $D$ such that
$1<|D|<|P|$ and every subgroup $H$ of $P$ of order $|D|$
is an $\mathscr{H}C$-subgroup of $G$. When $P$ is a Sylow $2$-subgroup of $G$ and $|P:D|>2$, suppose further that $H$ is an $\mathscr{H}C$-subgroup of $G$ if there exists $D_1\unlhd H\leq P$ such that $2|D_1|=|D|$ and $H/D_1$ is a cyclic group of order $4$. Then $G$ has a Sylow tower of
supersolvable type.
\end{Corollary}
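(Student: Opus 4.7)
The plan is to argue by induction on $|G|$, using Theorem~3.1 to peel off the smallest prime at each stage. Let $p$ denote the smallest prime divisor of $|G|$ and let $P$ be a Sylow $p$-subgroup of $G$. By the Corollary's hypothesis, $P$ is either cyclic or possesses a subgroup $D$ of the required form; moreover, when $p=2$ and $|P:D|>2$, the extra clause about $H/D_1$ cyclic of order $4$ is supplied verbatim by the Corollary's hypothesis. Thus the hypothesis of Theorem~3.1 is fulfilled, and Theorem~3.1 yields that $G$ is $p$-nilpotent. Write $G = P \ltimes H$, where $H = O_{p'}(G)$ is the normal Hall $p'$-subgroup.

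Next I would show that $H$ itself satisfies the hypothesis of the Corollary, so that induction may be applied to $H$. Let $Q$ be any noncyclic Sylow $q$-subgroup of $H$, where $q \neq p$. Since $|G:H|$ is a power of $p$, $Q$ is also a Sylow $q$-subgroup of $G$, and by hypothesis there exists $D_Q \leq Q$ with $1 < |D_Q| < |Q|$ such that every subgroup of $Q$ of order $|D_Q|$ is an $\mathscr{H}C$-subgroup of $G$. Each such subgroup is contained in $H$, so by Lemma~2.2(1) it is an $\mathscr{H}C$-subgroup of $H$. Because $p$ is the smallest prime in $\pi(G)$, every prime dividing $|H|$ strictly exceeds $p$; in particular $|H|$ is odd, so the supplementary Sylow-$2$ clause is vacuous for $H$. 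Hence $H$ satisfies the Corollary's hypothesis, and by induction $H$ admits a Sylow tower of supersolvable type: a characteristic chain $1 = H_0 < H_1 < \cdots < H_{n-1} = H$ with $H_i/H_{i-1}$ a Sylow $q_i$-subgroup of $H/H_{i-1}$, for primes $q_1 > q_2 > \cdots > q_{n-1}$.

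To finish, I would lift this tower to $G$. Since each $H_i$ is characteristic in $H$ and $H \trianglelefteq G$, each $H_i$ is normal in $G$. Moreover, since $q_i \neq p$ and $|G:H|$ is a $p$-power, each $H_i/H_{i-1}$ remains a Sylow $q_i$-subgroup of $G/H_{i-1}$. Appending $H_n = G$ gives $G/H_{n-1} = G/H \cong P$, the Sylow $p$-subgroup of $G/H_{n-1}$, and the primes occur in strictly decreasing order $q_1 > \cdots > q_{n-1} > p$. The resulting series $1 = H_0 < H_1 < \cdots < H_n = G$ is therefore a Sylow tower of supersolvable type for $G$.

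The only real obstacle is bookkeeping around hypothesis inheritance: one must confirm that both the cyclic alternative of Theorem~3.1 and its auxiliary $p=2$ clause are supplied by the Corollary's hypothesis for the Sylow $p$-subgroup, and that the $p$-complement $H$ fully inherits the Corollary's hypothesis so that induction applies. The first point is immediate from the statement, and the second reduces to Lemma~2.2(1) together with the observation that $H$ has odd order, which makes the Sylow-$2$ supplementary clause vacuous for $H$.
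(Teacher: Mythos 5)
Your proof is correct and follows essentially the route the paper intends: apply Theorem~3.1 at the smallest prime to obtain a normal $p$-complement, verify that the complement inherits the hypothesis (your observation that the complement has odd order, making the Sylow-$2$ clause vacuous, is the right point), and induct. The only cosmetic difference is that the paper cites the quotient part of Lemma~2.2 (part (3)) while you induct on the complement via the subgroup part (Lemma~2.2(1)); both give the same one-line deduction.
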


\begin{Theorem}\label{3.5}
Let $\mathfrak{F}$ be a saturated formation containing all supersolvable groups and let $G$ be a group with a normal subgroup $E$ such that $G/E\in \mathfrak{F}$.
Suppose that every
noncyclic Sylow subgroup $P$ $($if exists$)$ of $F^*(E)$ has a subgroup $D$ such that
$1<|D|<|P|$ and every subgroup $H$ of $P$ of order $|D|$
is an $\mathscr{H}C$-subgroup of $G$. When $P$ is a Sylow $2$-subgroup of $F^*(E)$ and $|P:D|>2$, suppose further that $H$ is an $\mathscr{H}C$-subgroup of $G$ if there exists $D_1\unlhd H\leq P$ such that $2|D_1|=|D|$ and $H/D_1$ is a cyclic group of order $4$. Then $G\in \mathfrak{F}$.
\end{Theorem}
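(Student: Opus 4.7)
The plan is a minimal counterexample argument: take $G$ to be a counterexample of smallest order. The first move is to restrict the hypothesis to $F^*(E)$ itself. By Lemma 2.2(1), every $\mathscr{H}C$-subgroup of $G$ that happens to lie inside $F^*(E)$ is also an $\mathscr{H}C$-subgroup of $F^*(E)$, so Corollary 3.2 applies with $F^*(E)$ in the role of the ambient group and gives $F^*(E)$ a Sylow tower of supersolvable type. In particular $F^*(E)$ is solvable, and the $F^*$-lemma (part~(2)) forces $F^*(E)=F(E)$.

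Next, pick a minimal normal subgroup $N$ of $G$ contained in $F(E)$; such an $N$ exists because $F^*(E)>1$ whenever $E>1$. Then $N$ is an elementary abelian $p$-group lying in the Sylow $p$-subgroup $P$ of $F(E)$. The target is to force $|N|=p$; once this is achieved, the descent step (below) gives $G/N\in\mathfrak F$, and Lemma 2.9 applied to the cyclic normal subgroup $N$ of $G$ produces $G\in\mathfrak F$, the desired contradiction.

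To secure $|N|=p$, split according to $|N|$ versus $|D|$. If $|N|>|D|$, pick $H\leq N$ with $|H|=|D|$; by hypothesis $H$ is an $\mathscr{H}C$-subgroup of $G$, and since $\Phi(N)=1$ makes $N=N/\Phi(N)$ a $G$-chief factor, Lemma 2.11 upgrades $H$ to an $\mathscr{H}$-subgroup of $G$. Because $N$ is abelian, $H\trianglelefteq N\trianglelefteq G$, so $H$ is subnormal, hence normal in $G$ by Lemma 2.1(1), contradicting the minimality of $N$. If $|N|\leq|D|$, one verifies that $(G/N,E/N)$ still satisfies the hypothesis of the theorem, using Lemma 2.2(2) for subgroups of $P$ containing $N$ and Lemma 2.2(3) for subgroups with prime distinct from $p$; minimality of $G$ then yields $G/N\in\mathfrak F$, and saturation of $\mathfrak F$ allows the further reduction $N\not\leq\Phi(G)$. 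In the remaining subcase $|N|=|D|$ one adapts step~(7) of the proof of Theorem~\ref{3.1} to $P/N$: overgroups of $N$ in $P$ of order $|D|$ remain $\mathscr{H}C$-subgroups of $G$, passing to $G/N$ and invoking the two lemmas on $Z_\infty$ and on $p'$-automorphisms (Lemmas 2.12 and 2.13) to control the action of $O^p(G)$ on $N$, forcing $N\leq Z_\infty(G)$ and hence $|N|=p$.

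The principal obstacle will be the descent of the hypothesis to $(G/N,E/N)$: the generalized Fitting subgroup is not in general preserved under quotients, so $F^*(E/N)$ may strictly contain $F^*(E)/N$, and one must argue that any newly appearing noncyclic Sylow subgroup of $F^*(E/N)$ either is cyclic or inherits a suitable ``$D$'' from a Sylow subgroup of $F(E)$. The extra $p=2$ hypothesis concerning cyclic quotients of order $4$ is a second delicate point, since the subgroup $D_1$ witnessing it must be chosen compatibly with $N$ in both the original group and the quotient.
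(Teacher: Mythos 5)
Your opening step is exactly the paper's: Lemma 2.2(1) plus Corollary 3.2 give $F^*(E)$ a Sylow tower, hence solvability, hence $F^*(E)=F(E)$ by Lemma 2.6(2). After that your route diverges, and it contains a genuine gap that you yourself flag but do not close. You propose to pass to the quotient $G/N$ by a minimal normal subgroup $N\leq F(E)$, but the hypothesis of the theorem does not descend to $(G/N,E/N)$, precisely because $F^*(E/N)$ can strictly contain $F^*(E)N/N$; naming this as ``the principal obstacle'' and then not resolving it leaves the induction without a base. Your fallback for the case $|N|=|D|$ --- invoking Lemmas 2.12 and 2.13 to force $N\leq Z_\infty(G)$ --- has no footing here: those lemmas require that minimal subgroups lie in $Z_\infty(G)$ (they are used in Theorems 3.5 and 3.6, where that is a hypothesis), and nothing in the present theorem supplies such a condition.

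The paper's proof avoids quotients altogether, and this is the idea you are missing. It sets up a dichotomy over the subgroups $H$ of order $|D|$ (or $2|D|$ in the exceptional $p=2$ case): either every such $H$ is an $\mathscr{H}$-subgroup of $G$ (or $|P:D|=p$, in which case Lemma 2.7 makes them c-normal), whence normality via Lemma 2.1(1) and a cited result (Theorem 1.4 of Li--Qiao--Wang) give $G\in\mathfrak{F}$ directly; or some $H$ is an $\mathscr{H}C$-subgroup that is \emph{not} an $\mathscr{H}$-subgroup, and then Lemma 2.8 produces a normal subgroup $M$ with $|G:M|=p$ and $G=HM$. The descent is then to the \emph{subgroup} $M$, not to a quotient: Lemma 2.6(1) gives $F^*(E\cap M)=F^*(E)\cap M=F(E)\cap M$ cleanly, so $(M,E\cap M)$ inherits the hypothesis, minimality yields $M\in\mathfrak{F}$, and Lemma 2.10 (using $F(G)\nleq M$, which holds since $H\leq F(G)$ and $G=HM$) concludes $G\in\mathfrak{F}$. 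If you want to salvage your approach you would need either to prove a quotient-stability statement for $F^*$ under your hypotheses or to switch to the subgroup descent; as written, the argument does not go through.
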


\begin{proof}
Suppose that the result is false and let $(G,E)$ be a counterexample such that $|G|+|E|$ is minimal. Then we proceed via the following steps.\par
\medskip
(1) $F^*(E)=F(E)$.\par
By Lemma 2.2(1) and Corollary 3.2, $F^*(E)$ has a Sylow tower of supersolvable type, and so $F^*(E)$ is solvable. It follows from Lemma 2.6(2) that $F^*(E)=F(E)$.\par
\medskip
(2) There exist a noncyclic Sylow $p$-subgroup $P$ of $F(E)$ and a subgroup $H$ of $P$ of order $|D|$ or 2$|D|$ (when $p=2$, and there exists $D_1\unlhd H\leq P$ such that $2|D_1|=|D|$ and $H/D_1$ is a cyclic group of order $4$) such that $|P:D|>p$ and $H$ is not an $\mathscr{H}$-subgroup of $G$.\par
Suppose that for every prime divisor $p$ of $|F(E)|$ and every noncyclic Sylow $p$-subgroup $P$ of $F(E)$, either $|P:D|=p$ or all subgroups $H$ of $P$ of order $|D|$ or 2$|D|$ (when $p=2$, $|P:D|>2$ and there exists $D_1\unlhd H\leq P$ such that $2|D_1|=|D|$ and $H/D_1$ is a cyclic group of order $4$) are $\mathscr{H}$-subgroups of $G$. In the former case, by Lemma 2.7, all subgroups $H$ of $P$ of order $|D|$ are c-normal in $G$. In the latter case, by Lemma 2.1(1), all subgroups $H$ of $P$ of order $|D|$ or 2$|D|$ (when $p=2$, $|P:D|>2$ and there exists $D_1\unlhd H\leq P$ such that $2|D_1|=|D|$ and $H/D_1$ is a cyclic group of order $4$) are normal in $G$. Hence by \cite[Theorem 1.4]{32}, $G\in \mathfrak{F}$, a contradiction. Thus (2) holds.\par
\medskip
(3) Final contradiction.\par
By hypothesis and (2), $H$ is an $\mathscr{H}C$-subgroup of $G$ and $H$ is not an $\mathscr{H}$-subgroup of $G$. Hence $G$ has a normal subgroup $M$ such that $|G:M|=p$ and $G=HM$ by Lemma 2.8. Since $G=HM=EM$, we have that $M/E\cap M\cong G/E\in \mathfrak{F}$. By Lemma 2.6(1), $F^*(E\cap M)=F^*(E)\cap M=F(E)\cap M$. Note that $|F(E):F(E)\cap M|=|G:M|=p$ and $|F(E)\cap M|_p>|D|$ by (2). Then clearly, $(M, E\cap M)$ satisfies the hypothesis of the theorem by Lemma 2.2(1). By the choice of $(G,E)$, $M\in \mathfrak{F}$. It follows from Lemma 2.10 that $G\in \mathfrak{F}$. The final contradiction completes the proof.
\end{proof}

\begin{Theorem}\label{3.4}
Let $\mathfrak{F}$ be a saturated formation containing all supersolvable groups and let $G$ be a group with a normal subgroup $E$ such that $G/E\in \mathfrak{F}$.
Suppose that every
noncyclic Sylow subgroup $P$ $($if exists$)$ of $E$ has a subgroup $D$ such that
$1<|D|<|P|$ and every subgroup $H$ of $P$ of order $|D|$
is an $\mathscr{H}C$-subgroup of $G$. When $P$ is a Sylow $2$-subgroup of $E$ and $|P:D|>2$, suppose further that $H$ is an $\mathscr{H}C$-subgroup of $G$ if there exists $D_1\unlhd H\leq P$ such that $2|D_1|=|D|$ and $H/D_1$ is a cyclic group of order $4$. Then $G\in \mathfrak{F}$.
\end{Theorem}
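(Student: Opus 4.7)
The plan is to derive Theorem 3.4 from Theorem 3.5 by a minimal-counterexample induction on $|G|+|E|$, in which one peels off the normal Sylow $q$-subgroup of $E$ for the largest prime $q$ dividing $|E|$ and then invokes Theorem 3.5 on the remaining $q$-group.

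Let $(G,E)$ be a counterexample with $|G|+|E|$ minimal. First I would note that by Lemma 2.2(1) every $\mathscr{H}C$-subgroup of $G$ that sits inside $E$ is already an $\mathscr{H}C$-subgroup of $E$, so the hypothesis of Corollary 3.2 applies to $E$. Hence $E$ has a Sylow tower of supersolvable type; let $q$ be the largest prime dividing $|E|$ and $Q$ the Sylow $q$-subgroup of $E$. Then $Q=O_q(E)\lhd E$, and since $Q$ is characteristic in $E\lhd G$ we obtain $Q\lhd G$.

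Next I would check that $(G/Q,E/Q)$ still satisfies the hypothesis of Theorem 3.4. One has $(G/Q)/(E/Q)\cong G/E\in\mathfrak{F}$, and for every prime $r\neq q$ dividing $|E/Q|$ the Sylow $r$-subgroup of $E/Q$ is $P_rQ/Q\cong P_r$. Since $(r,|Q|)=1$, Lemma 2.2(3) converts each $\mathscr{H}C$-subgroup $S\leq P_r$ of order $|D_r|$ in $G$ into the $\mathscr{H}C$-subgroup $SQ/Q$ of $G/Q$ of the same order, and the extra $r=2$ condition transfers because, for $D_1\lhd H\leq P_r$ with $H/D_1$ cyclic of order $4$, one has $D_1Q/Q\lhd HQ/Q$ and $(HQ/Q)/(D_1Q/Q)\cong H/D_1$. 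Since $Q\neq 1$, we have $|G/Q|+|E/Q|<|G|+|E|$, so by the minimality of the counterexample $G/Q\in\mathfrak{F}$.

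Finally I would apply Theorem 3.5 to the pair $(G,Q)$. We now have $Q\lhd G$ and $G/Q\in\mathfrak{F}$, and since $Q$ is a $q$-group it is nilpotent, so $F^*(Q)=Q$ and the only possibly noncyclic Sylow subgroup of $F^*(Q)$ is $Q$ itself. If $Q$ is cyclic the Theorem 3.5 hypothesis is vacuous; otherwise the subgroup $D$ supplied by the Theorem 3.4 hypothesis on the Sylow $q$-subgroup $Q$ of $E$ (together with the refinement when $q=2$) is literally the $D$ demanded by Theorem 3.5. Thus Theorem 3.5 yields $G\in\mathfrak{F}$, contradicting our choice of counterexample. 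The main obstacle is the bookkeeping in the preceding paragraph: one must track the technical $r=2$ condition through the coprime quotient by $Q$ and verify that every subgroup of $P_rQ/Q$ of the relevant order lifts to a subgroup of $P_r$, which holds because $Q$ is a normal $q$-complement to $P_r$ inside $P_rQ$.
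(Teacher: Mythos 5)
Your proposal is correct and follows essentially the same route as the paper: the authors likewise take a minimal counterexample, use Lemma 2.2(1) and Corollary 3.2 to get a Sylow tower of supersolvable type for $E$, quotient by the normal Sylow subgroup $P$ of $E$ for the largest prime, apply induction via Lemma 2.2(3) to get $G/P\in\mathfrak{F}$, and then invoke the $F^*$-version of the theorem on the pair $(G,P)$. Your extra bookkeeping on transferring the $p=2$ condition through the coprime quotient is sound and merely makes explicit what the paper leaves implicit.
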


\begin{proof}
Suppose that the result is false and let $(G,E)$ be a counterexample such that $|G|+|E|$ is minimal. By Lemma 2.2(1) and Corollary 3.2, we see that $E$ has a Sylow tower of
supersolvable type.
Without loss of generality, let $p$ be the largest prime divisor of $|E|$.
Then $P\unlhd G$. By Lemma
\ref{2.2}(3), $(G/P,E/P)$ satisfies the hypothesis of the theorem.
Then the choice of $(G,E)$ implies that $G/P\in \mathfrak{F}$. Hence $(G,P)$ satisfies the hypothesis of Theorem 3.3, and so $G\in \mathfrak{F}$.
\end{proof}

\begin{Theorem}\label{3.7}
Let E be a normal subgroup of G such that $G/E$ is
nilpotent. Suppose that every minimal subgroup of $E$ is contained in $Z_\infty(G)$, and
every cyclic subgroup of E of order $4$ is an $\mathscr{H}C$-subgroup of $G$. Then G is nilpotent.
\end{Theorem}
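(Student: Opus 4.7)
The plan is to argue by minimal counterexample. Let $(G,E)$ be a pair satisfying the hypothesis but with $G$ not nilpotent, chosen so that $|G|+|E|$ is minimal. The strategy is to show $E\le Z_{\infty}(G)$; once this is done, $G/Z_{\infty}(G)$ is a quotient of the nilpotent group $G/E$ and hence nilpotent, while its hypercenter is trivial by definition, forcing $G=Z_{\infty}(G)$, so $G$ is nilpotent, contradicting the choice of $(G,E)$.

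The first sub-goal is to show that $E$ is nilpotent. Since $Z_{\infty}(G)\cap E\le Z_{\infty}(E)$, the hypothesis gives $\Omega_{1}(P)\le Z_{\infty}(E)$ for every Sylow $p$-subgroup $P$ of $E$. For each odd prime $p$ dividing $|E|$, classical hypercenter arguments together with Lemma 2.5(1) force $E$ to be $p$-nilpotent: every element of prime order in $P$ can be shown to lie in $Z(N_{E}(P))$ via iterated commutator calculations against the characteristic subgroup $\Omega_{1}(P)$. For $p=2$, the supplementary hypothesis that every cyclic subgroup of $E$ of order $4$ is an $\mathscr{H}C$-subgroup of $G$, combined with Lemmas 2.2, 2.4 and 2.5(2), yields that $E$ is $2$-nilpotent. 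Hence $E$ is nilpotent, each Sylow $p$-subgroup $P_{p}$ of $E$ is characteristic in $E$, and so $P_{p}\trianglelefteq G$.

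The second sub-goal is to show that $P_{p}\le Z_{\infty}(G)$ for every prime $p$ dividing $|E|$. Since $\Omega_{1}(P_{p})$ is characteristic in the normal subgroup $P_{p}$ and lies in $Z_{\infty}(G)$, Lemma 2.11 yields $O^{p}(G)\le C_{G}(\Omega_{1}(P_{p}))$. For odd $p$, Lemma 2.12 upgrades this to $O^{p}(G)\le C_{G}(P_{p})$, so $G/C_{G}(P_{p})$ is a $p$-group; the standard argument that a $p$-group acts with a central series on a normal $p$-subgroup then gives $P_{p}\le Z_{\infty}(G)$. For $p=2$ with $P_{2}$ abelian the same argument works; for $P_{2}$ non-abelian one must first establish $\Omega_{2}(P_{2})\le Z_{\infty}(G)$ in order to apply the unrestricted form of Lemma 2.12, and this is where the $\mathscr{H}C$-hypothesis on cyclic order-$4$ subgroups is essential. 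For each such subgroup $X\le P_{2}$, either $X$ is already an $\mathscr{H}$-subgroup of $G$ (and Lemma 2.1 gives enough control on its normal closure), or Lemma 2.4 produces a normal $M\trianglelefteq G$ with $|G:M|=2$ and $G=XM$; in the latter case $(M,E\cap M)$ inherits the hypothesis via Lemma 2.2, is nilpotent by minimality of $(G,E)$, and this forces $X\le Z_{\infty}(G)$.

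The main obstacle is precisely the $p=2$, $P_{2}$ non-abelian case: translating the weak $\mathscr{H}C$-hypothesis on cyclic order-$4$ subgroups into the much stronger containment $\Omega_{2}(P_{2})\le Z_{\infty}(G)$ that Lemma 2.12 demands. The remaining ingredients — Lemmas 2.11 and 2.12, together with the standard fact that a $p$-group acting on a normal $p$-subgroup produces a $G$-central series — are routine, so the substantive work lies in the index-$2$ descent afforded by Lemma 2.4, combined with the minimality of $(G,E)$, which must be exploited carefully to promote cyclic order-$4$ elements of $P_{2}$ into the hypercenter of $G$.
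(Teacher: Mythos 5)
Your plan diverges from the paper's (which reduces a minimal counterexample to a Schmidt group and exploits the chief factor $P/\Phi(P)$ via Lemma 2.11), and as written it has two genuine gaps. The first is in your proof that $E$ is nilpotent. You claim that $\Omega_1(P)\leq Z_\infty(E)$ allows you to place every element of prime order of a Sylow $p$-subgroup $P$ of $E$ into $Z(N_E(P))$ ``via iterated commutator calculations'' and then to invoke Li's theorem (the paper's Lemma 2.5(1)). That implication is false: membership in the hypercenter does not force membership in the center of the Sylow normalizer. For instance, if $P=E=G$ is an extraspecial $p$-group of exponent $p$, then $\Omega_1(P)=P\leq Z_\infty(E)$ while $Z(N_E(P))=Z(P)$ has order $p$. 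Results of the form ``minimal subgroups in the hypercenter imply $p$-nilpotence'' are true, but they are proved precisely by the minimal non-$p$-nilpotent (Schmidt group) reduction that the paper uses and that your outline omits; you cannot obtain this step by a direct commutator computation.

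The second and more serious gap is the step you yourself identify as the main obstacle. When a cyclic subgroup $X\leq P_2$ of order $4$ fails to be an $\mathscr{H}$-subgroup, Lemma 2.8 gives $M\unlhd G$ with $|G:M|=2$ and $G=XM$, and minimality gives $M$ nilpotent; but your conclusion ``this forces $X\leq Z_\infty(G)$'' does not follow from these facts alone. Take $G=C_3\rtimes C_4$ with $C_4$ acting through its quotient of order $2$, $M\cong C_6$ and $X\cong C_4$: then $M$ is nilpotent and normal of index $2$, $G=XM$, yet $Z_\infty(G)$ has order $2$ and $X\nleq Z_\infty(G)$. (This $G$ does not satisfy the theorem's hypotheses, but it shows the inference is invalid without further input, and you supply none.) The paper sidesteps both difficulties simultaneously: in a minimal counterexample $G=P\rtimes Q$ is a Schmidt group with $P/\Phi(P)$ a chief factor of $G$ and $\exp(P)\leq 4$, so Lemma 2.11 converts every order-$4$ $\mathscr{H}C$-subgroup of $P$ into an $\mathscr{H}$-subgroup, hence (being subnormal) into a normal subgroup; then $P$ is either generated by elements of order $2$, forcing $P\leq Z_\infty(G)$, or $P=\langle x\rangle$ is cyclic and $G$ is $2$-nilpotent. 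That chief-factor argument is the missing ingredient. A minor point: your lemma references follow the source labels rather than the displayed numbering; the index-$p$ lemma is Lemma 2.8, the lemma giving $O^p(G)\leq C_G(P)$ is Lemma 2.12, and Gagen's automorphism lemma is Lemma 2.13.
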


\begin{proof}
Assume that the result is false and let $(G,E)$ be a counterexample such that
$|G|+|E|$ is minimal. Then we prove the theorem via the following steps.\medskip

(1) $G$ is a minimal nonnilpotent group, that is, $G=P\rtimes Q$, where $P$ is a normal Sylow $p$-subgroup of $G$
and $Q$ is a nonnormal cyclic Sylow $q$-subgroup of $G$ for some prime $q\neq p$;
$P/\Phi(P)$ is a chief factor of $G$; $exp(P)=p$ when $p > 2$ and $exp(P)$ is at most 4 when $p=2$.\par
Let $K$ be any proper subgroup of $G$. Then $K/E\cap K \cong EK/E\leq G/E$
is nilpotent, and every minimal subgroup of $E\cap K$ is contained
in $Z_\infty(G)\cap K \leq
Z_\infty(K)$. By hypothesis, every cyclic subgroup of $E\cap K$ of
order 4 is an $\mathscr{H}C$-subgroup of $G$. Thus by Lemma \ref{2.2}(1), every
cyclic subgroup of $E\cap K$ of order 4 is an $\mathscr{H}C$-subgroup of $K$.
Hence $(K,E\cap K)$ satisfies the hypothesis of the theorem.
Then the choice of $(G,E)$ implies that $K$ is nilpotent. Hence $G$ is a minimal nonnilpotent group, and so (1) holds by [\ref{09}, Chapter III, Satz 5.2].\par\medskip
(2) $P\leq E$.\par
If not, then $P\cap E<P$, and so $(P\cap E)Q<G$. By (1), $(P\cap E)Q$ is nilpotent. This implies that $Q\unlhd (P\cap E)Q$. Since $G/P\cap E\lesssim G/P\times G/E$ is nilpotent, $(P\cap E)Q\unlhd G$, and thus $Q\unlhd G$, a contradiction.\par
\medskip
(3) Final contradiction.\par
If $exp(P)=p$, then $P\leq Z_\infty(G)$, and so $G$ is nilpotent, which is impossible. Hence we may assume that $p=2$ and $exp(P)=4$. Then by Lemma 2.11, every cyclic subgroup of $P$ of order $4$ is an $\mathscr{H}$-subgroup of $G$, and so every cyclic subgroup of $P$ of order $4$ is normal in $G$ by Lemma 2.1(1). Take an element $x\in P\backslash \Phi(P)$. Since $P/\Phi(P)$ is a chief factor of $G$, $P=\langle x\rangle^G\Phi(P)=\langle x\rangle^G$. If $x$ is of order 2, then $P=\langle x\rangle^G\leq Z_\infty(G)$, a contradiction. Now assume that $x$ is of order 4. Then $\langle x\rangle\unlhd G$, and so $P=\langle x\rangle$ is cyclic. By \cite[(10.1.9)]{08}, $G$ is 2-nilpotent, and so $Q\unlhd G$. This is the final contradiction.\end{proof}

\begin{Theorem}\label{3.9}
Let E be a normal subgroup of G such that $G/E$ is
nilpotent. Suppose that every minimal subgroup of $F^*(E)$ is contained in $Z_\infty(G)$, and
every cyclic subgroup of $F^*(E)$ of order $4$ is an $\mathscr{H}C$-subgroup of $G$. Then G is nilpotent.
\end{Theorem}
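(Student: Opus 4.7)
The plan is to deduce Theorem 3.6 from Theorem 3.5 in two stages: first establishing that $F^*(E)$ is nilpotent, and then, in a minimal counterexample, reducing to the case $E = G$ where the argument hinges on proving $F(G) \leq Z_\infty(G)$.

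The opening move applies Theorem 3.5 to the pair $(F^*(E), F^*(E))$. This is legitimate because $F^*(E)/F^*(E)$ is trivially nilpotent; every minimal subgroup of $F^*(E)$ lies in $Z_\infty(G) \cap F^*(E) \leq Z_\infty(F^*(E))$, since intersecting the upper central series of $G$ with a subgroup yields a central series; and by Lemma 2.2(1) each cyclic subgroup of $F^*(E)$ of order $4$ is an $\mathscr{H}C$-subgroup of $F^*(E)$. Theorem 3.5 therefore gives $F^*(E)$ nilpotent, so $F^*(E) = F(E)$ by Lemma 2.6(2).

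Next, pick a counterexample $(G, E)$ with $|G| + |E|$ minimal. If $E < G$, the pair $(E, E)$ meets the hypothesis of Theorem 3.6 (by the same observations) and has $|E| + |E| < |G| + |E|$, so minimality forces $E$ nilpotent. But then $E = F^*(E)$, Theorem 3.5 applies to $(G, E)$ directly, and $G$ is nilpotent---a contradiction. Hence $E = G$, the hypothesis concerns $F^*(G) = F(G)$, and $C_G(F(G)) \leq F(G)$ by Lemma 2.6(3).

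It remains to prove $F(G) \leq Z_\infty(G)$. Let $P$ be a Sylow $p$-subgroup of $F(G)$; then $P \unlhd G$ and $\Omega_1(P) \leq Z_\infty(G)$, so Lemma 2.12 gives $O^p(G) \leq C_G(\Omega_1(P))$ and every $p'$-element of $G$ acts on $P$ as a $p'$-automorphism centralizing $\Omega_1(P)$. For odd $p$, Lemma 2.13 forces triviality, so $G/C_G(P)$ is a $p$-group, and a standard argument (a $p$-group acting on a $p$-group stabilizes a central series) gives $P \leq Z_\infty(G)$. For $p = 2$, one needs $\Omega_2(P) \leq Z_\infty(G)$; the plan is to select a $G$-invariant subgroup $L \leq P$ with $L/\Phi(L)$ a chief factor of $G$, convert the cyclic-order-$4$ $\mathscr{H}C$-subgroups into $\mathscr{H}$-subgroups via Lemma 2.11, and then extract the required normality via Lemma 2.1(1), following the pattern used at the end of the proof of Theorem 3.5. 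This $p = 2$ case is the main technical obstacle. Once $F(G) \leq Z_\infty(G)$, the stabilizer of the resulting central filtration makes $G/C_G(F(G))$ nilpotent; since $C_G(F(G)) \leq F(G) \leq Z_\infty(G)$, the quotient $G/Z_\infty(G)$ is also nilpotent, forcing $G = Z_\infty(G)$---the final contradiction.
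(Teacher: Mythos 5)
Your overall architecture (reduce to $E=G$ with $F^*(G)=F(G)$, then try to push $F(G)$ into $Z_\infty(G)$ and derive a contradiction with $C_G(F(G))\leq F(G)$) is close to the paper's, and your first two stages are sound. But the step you yourself flag as ``the main technical obstacle'' --- showing $\Omega_2(P)\leq Z_\infty(G)$ for $p=2$ --- is a genuine gap, and the plan you sketch for it does not work. Lemma 2.11 requires the cyclic subgroup $H$ of order $4$ to sit inside some $L$ with $L/\Phi(L)$ a chief factor of $G$; in Theorem 3.5 that $L$ exists because $G$ there is minimal nonnilpotent, so $P/\Phi(P)$ is a chief factor, but in the present theorem $G$ is not minimal nonnilpotent (only its proper \emph{normal} subgroups can be assumed nilpotent), and there is no reason a given order-$4$ cyclic subgroup of $F(G)$ lies under such an $L$. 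So the conversion of $\mathscr{H}C$-subgroups into $\mathscr{H}$-subgroups, which is the heart of the matter, is not achieved.

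The paper closes this gap by a different mechanism that your proposal omits: it first proves, by induction on $|G|+|E|$ using $F^*(E\cap K)=F^*(E)\cap K$ (Lemma 2.6(1)), that \emph{every} proper normal subgroup of $G$ is nilpotent, whence also $\gamma_\infty(G)=G$. Then for a cyclic $H\leq F(G)$ of order $4$ with $\mathscr{H}C$-decomposition $G=HT$, the case $T<G$ is impossible ($T$ would be nilpotent normal, so $T\leq F(G)$ and $G=HT\leq F(G)$), forcing $T=G$; thus $H$ is an $\mathscr{H}$-subgroup, subnormal (being inside the nilpotent normal subgroup $F(G)$), hence normal by Lemma 2.1(1), and then $G/C_G(H)$ abelian together with $\gamma_\infty(G)=G$ gives $H\leq Z(G)$. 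Without the ``proper normal subgroups are nilpotent'' step you have no way to rule out $T<G$, and without $\gamma_\infty(G)=G$ (or the observation that $[H,G]\leq\Omega_1(H)\leq Z_\infty(G)$ once $H\unlhd G$) you cannot place $H$ in the hypercenter. Your odd-$p$ argument and the final stabilizer argument are fine, but as it stands the proof is incomplete precisely where the order-$4$ hypothesis must be used.
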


\begin{proof}
Assume that the result is false and let $(G,E)$ be a counterexample such that
$|G|+|E|$ is minimal. Then we prove the theorem via the following steps.\par\medskip
(1) Every proper normal subgroup of $G$ is nilpotent.\par
Let $K$ be any proper normal subgroup of $G$. Then $K/E\cap K \cong EK/E\leq G/E$
is nilpotent. By Lemma 2.6(1), $F^*(E\cap K)=F^*(E)\cap K$. Hence by Lemma 2.2(1), $(K,E\cap K)$ satisfies the hypothesis of the theorem.
The the choice of $(G,E)$ implies that $K$ is nilpotent.\par\medskip
(2) $E=G=\gamma_{\infty}(G)$ and $F^*(G)=F(G)<G$, where $\gamma_{\infty}(G)$ is the nilpotent residual of $G$.\par
If $E<G$, then $E$ is nilpotent by (1), and so $F^*(E)=F(E)=E$. By
Theorem \ref{3.7}, $G$ is nilpotent, a contradiction. Thus $E=G$. Now suppose that $F^*(G)=G$. Then by Theorem \ref{3.7} again, $G$ is nilpotent, which is impossible. Hence $F^*(G)<G$, and $F^*(G)=F(G)$ by (1). If $\gamma_{\infty}(G)<G$, then by (1), $\gamma_{\infty}(G)\leq F(G)$, and so $G/F(G)$ is nilpotent. It follows from Theorem \ref{3.7} that $G$ is nilpotent, a contradiction. Thus $\gamma_{\infty}(G)=G$.\par\medskip
(3) every cyclic subgroup of $F(G)$ of order $4$ is contained in $Z(G)$.\par
By hypothesis and (2), every cyclic subgroup $H$ of $F(G)$ of order $4$ is an $\mathscr{H}C$-subgroup of $G$. Then there exists a normal subgroup $T$ of $G$ such that $G=HT$ and $H^g \cap N_T(H)\leq H$ for all $g\in G$. If $T<G$, then $T\leq F(G)$ by (1), and thereby $F(G)=G$, a contradiction. Hence $T=G$, and so $H$ is an $\mathscr{H}$-subgroup of $G$. By Lemma 2.1(1), $H\unlhd G$. This implies that $G/C_G(H)$ is abelian. Then by (2), $C_G(H)=\gamma_{\infty}(G)=G$, and so $H\leq Z(G)$. Thus (3) holds.\par\medskip
(4) Final contradiction.\par
Let $p$ be any prime divisor of $|F(G)|$ and let $P$ be the Sylow $p$-subgroup of $F(G)$. Then $P\unlhd G$. If $p$ is odd, then by hypothesis, $\Omega_1(P)\leq Z_\infty(G)$. It follows from Lemma 2.12 that $O^p(G)\leq C_G(\Omega_1(P))$, and so $O^p(G)\leq C_G(P)$ by Lemma 2.13. Then by (2), $C_G(P)=\gamma_{\infty}(G)=G$. Now consider that $p=2$. Then by hypothesis and (3), $\Omega_2(P)\leq Z_\infty(G)$. A similar discussion as above also shows that $C_G(P)=G$. Therefore, we have that $C_G(F(G))=G$, which contradicts the fact that $C_G(F(G))\leq F(G)$ by (2) and Lemma 2.6(3). The proof is thus completed.\end{proof}

\end{document}